 \numberwithin{equation}{section}
 \theoremstyle{plain}
  \newtheorem{thm}{Theorem}
 \newtheorem{prop}{Proposition}[section]
 \newtheorem{lem}[prop]{Lemma}
 \theoremstyle{definition}
 \newtheorem{definition}[prop]{Definition}
 \theoremstyle{remark}
 \newtheorem{remark}[prop]{Remark}
 \let\pa=\partial
 \let\al=\alpha
 \let\b=\beta
 \let\d=\delta
 \let\g=\gamma
 \let\e=\varepsilon
 \let\lam=\lambda
 \let\f=\frac
 \let\inf = \infty
 \let \les = \lesssim
 \let\om=\omega
 \let \th = \theta
\let\B = \Big
 \let\Om=\Omega
 \let\td = \tilde
 \let\teq \triangleq
 \let\pa=\partial
 \let \bsh = \backslash
 \def\cA{{\mathcal A}}
 \def\cH{{\mathcal H}}
 \def\cL{{\mathcal L}}
 \def\cR{{\mathcal R}}
 \def\cT{{\mathcal T}}
 \def\la{\langle}
 \def\ra{\rangle}
\def\lt{\left}
\def\rt{\right}
 \newcommand{\beq}{\begin{equation}}
 \newcommand{\eeq}{\end{equation}}
  \newcommand{\bal}{\begin{aligned} }
  \newcommand{\eal}{\end{aligned}}
 \newcommand{\ben}{\begin{eqnarray}}
 \newcommand{\een}{\end{eqnarray}}
 \newcommand{\beno}{\begin{eqnarray*}}
 \newcommand{\eeno}{\end{eqnarray*}}
 \newcommand{\R}{\mathbb{R}}
\newcommand{\domega}{\boldsymbol{\omega}}
 \author{Jiajie Chen}
 \address{Applied and Computational Mathematics, California Institute of Technology, Pasadena, CA 91125, USA}
 \date{\today}
\title[ Slightly Perturbed De Gregorio Model]
{On the Slightly Perturbed De Gregorio Model on $S^1$}
\begin{document}

\begin{abstract}

It is conjectured that the generalization of the Constantin-Lax-Majda model (gCLM)
$\omega_t + a u\omega_x = u_x \omega$ due to Okamoto, Sakajo and Wunsch can develop a finite time singularity from smooth initial data for $a < 1$. For the endpoint case where $a$ is close to and less than $1$, we prove finite time asymptotically self-similar blowup of gCLM on a circle from a class of smooth initial data. For the gCLM on a circle with the same initial data, if the strength of advection $a$ is slightly larger than $1$, we prove that the solution exists globally with $|| \omega(t)||_{H^1}$ decaying in a rate of $O(t^{-1})$ for large time. The transition threshold between two different behaviors is $a=1$, which corresponds to the De Gregorio model.
\end{abstract}

 \maketitle

\section{Introduction}

Constantin, Lax and Majda \cite{CLM85} introduced an one-dimensional model (CLM)
\[
\om_t = u_x\om , \quad u_x = H \om
\]
to model the vortex stretching term in the three-dimensional Euler equations, where $H$ is the Hilbert transform. The advection term is missing in CLM model. In order to model both advection and vortex stretching, De Gregorio \cite{DG90,DG96} generalized the CLM model by adding 
an advection term $u \om_x$. By interpolating the CLM model and the De Gregorio model, Okamoto, Sakajo and Wunsch \cite{OSW08} further introduced the following one-parameter family of models (gCLM)
\beq\label{eq:DG}
\bal
\om_t + a u \om_x& = u_x \om , \quad u_x=  H  \om,
\eal
\eeq
where $a$ is a parameter and $H$ is the Hilbert transform. In the case of a circle, $H$ is given by 
\[
H \om (x) =    \f{1}{2 \pi} P.V. \int_{-\pi}^{\pi} \om(y) \cot( \f{x-y}{2}) dy.
\]
If $a = 0$, \eqref{eq:DG} is the CLM model. If $a=1$, it becomes the De Gregorio model. The gCLM shares some similarities with the 3D Euler equations. In fact, the 3D incompressible Euler equations in the vorticity formulation can be written as 
  \beq\label{eq:3DEuler}
    {\domega}_t+({\bf u}\cdot\nabla) {\domega} = \nabla
    {\bf u} \cdot {\domega} ,
    \eeq
  where ${\bf u}$ is the velocity field and ${\domega}=\nabla\times {\bf u}$ is the vorticity. 
The term $u_x \om$ in \eqref{eq:DG} models vortex stretching in \eqref{eq:3DEuler} which is the
main source of difficulty in obtaining global regularity of 3D Euler equations, and $u \om_x$ in \eqref{eq:DG} models advection in \eqref{eq:3DEuler} which has a stabilizing effect \cite{lei2009stabilizing,hou2008dynamic}. 

The gCLM model \eqref{eq:DG} has been studied actively in recent years since it can characterize the competition between advection and vortex stretching in different scenarios. For $a < 0$, the advection would work together with the vortex stretching to produce a singularity.
Indeed, Castro and C\'ordoba \cite{Cor10}  proved the finite time blow-up for $a < 0$ based on a Lyapunov functional argument. The case of $a=0$ reduces to the CLM model and finite time singularity was established by Constantin, Lax and Majda \cite{CLM85}.

For $a > 0$, there is a competing nonlocal stabilizing effect due to the advection and a destabilizing effect due to vortex stretching. For small positive $a$, it is expected that the vortex stretching term will dominate the advection term. In \cite{Elg17}, Elgindi and Jeong constructed smooth self-similar profiles for small $a$ that lead to finite time blowup using a power series expansion and an iterative construction.
In a recent joint work with Hou and Huang \cite{chen2019finite}, we established the stability of an approximate self-similar profile and obtained finite time asymptotically self-similar blowup for $C_c^{\infty}$ initial data. Similar results were obtained independently by Elgindi, Ghoul and Masmoudi \cite{Elg19} on the stability of the self-similar solutions constructed in \cite{Elg17} and the stability of the asymptotically self-similar blowup of \eqref{eq:DG}. For $a$ close to $\f{1}{2}$, where the vortex stretching term is relatively stronger, finite time asymptotically self-similar blowup for $C_c^{\infty}$ initial data has been established by the author \cite{chen2020singularity}. The self-similar singularities in these works are focusing in the sense that they can be written as $\om(x, t) = \f{1}{T-t} \Om( \f{x}{(T-t)^{c_l} })$ for some $T, c_l > 0$ and a nontrivial profile $\Om$.

For $a=1$, \eqref{eq:DG} reduces to the De Gregorio model. The analysis becomes much more challenging since advection and vortex stretching are comparable. Different behaviors of the solution of \eqref{eq:DG} on the real line and on a circle have been established. For \eqref{eq:DG} on a circle, in a remarkable work of Jia, Stewart and Sverak \cite{Sve19}, they established the nonlinear stability of the equilibria $A \sin( x - x_0)$ of \eqref{eq:DG} using spectral theories and complex variable methods. An alternative proof was obtained later by Lei, Liu and Ren \cite{lei2019constantin} using a direct energy method with a magic energy space. Moreover, global well-posedness of the solution has been obtained in \cite{lei2019constantin} for initial data $\om_0$ that has a fixed sign and $| \om_0|^{1/2} \in H^1$. Our analysis to be presented has benefited from some observations made in \cite{Sve19} and the stability analysis in \cite{lei2019constantin}. In particular, our stability analysis is built on the coercivity estimates of a linearized operator established in \cite{lei2019constantin} and we follow \cite{lei2019constantin} to establish the weighted $H^1$ estimates except that we need to estimate some additional perturbation terms. For \eqref{eq:DG} on the real line, in a recent joint work with Hou and Huang \cite{chen2019finite}, we proved finite time asymptotically self-similar blowup for $C_c^{\infty}$ initial data by establishing nonlinear stability of an approximate self-similar profile in the dynamic rescaling equation. In contrast to the case of small positive $a$ and $a$ close to $\f{1}{2}$, the self-similar singularity corresponding to $a=1$ is expanding and it can be written as $\om(x,t) = \f{1}{T-t} \Om( (T-t) x)$ for some $T >0$ and a nontrivial profile $\Om$. The strength of the advection plays an important role in the transition from focusing self-similar blowup to the expanding one.

For arbitrary large $|a|$, $C^{\al}$ self-similar profiles were constructed in \cite{Elg17}. Stability of approximate self-similar profiles and finite time asymptotically self-similar blowup for $C_c^{\al}$ initial data have been established in \cite{chen2019finite}. See also \cite{Elg19} for similar results. For this range of $a$, it is expected that advection dominates. Yet, an important observation made in \cite{Elg17} is that advection term can be substantially weakened by choosing $C^{\al}$ initial data with sufficiently small $\al$ so that the vortex stretching term still dominates. This observation and the idea of weakening the advection have played a crucial role in recent works of singularity formation, including Elgindi's breakthrough \cite{elgindi2019finite} on singularity formation of 3D axisymmetric Euler equations without swirl for $C^{1,\al}$ velocity and singularity formation of 2D Boussinesq equations and 3D axisymmetric Euler equations with $C^{1,\al}$ velocity and boundary obtained in my joint work with Hou \cite{chen2019finite2}. 

Recently, Lushnikov, Silantyev and Siegel \cite{lushnikov2020collapse} performed analysis and extensive numerical study on \eqref{eq:DG} and provided numerical evidence for singularity formation of \eqref{eq:DG} with various $a$ and obtained a critical value $a_c \approx 0.6890665$. For \eqref{eq:DG} on the real line, they showed using well resolved computations that the expanding blowup for the case of $a=1$ obtained in \cite{chen2019finite} can be generalized to $ a_c < a \leq 1$. For \eqref{eq:DG} on a circle, they discovered a new type of self-similar blowup solutions of the form $\om(x, t) = \f{1}{t_c - t} f(x)$ for $a_c < a \leq 0.95$, which is neither focusing nor expanding. Due to some numerical difficulties, the range of $0.95 < a< 1$ was not explored and it was conjectured by the authors that self-similar blowup still exists. Our blowup results to be introduced are inspired by these new discoveries. There are other 1D models for the 3D Euler equations and the surface quasi-geostrophic equation, e.g. \cite{choi2014on,cordoba2005formation}, and we refer to \cite{Elg17} for an excellent survey.

In this paper, we study \eqref{eq:DG} on a circle with $a$ close to $1$, which can be regarded as a slightly perturbed De Gregorio model (\eqref{eq:DG} with $a=1$). For smooth initial data, this range of parameters $a$ is perhaps the most interesting one in \eqref{eq:DG} since it contains both the region where the strength of advection is slightly weaker than that of vortex stretching and the region where the strength of advection is slightly stronger than that of vortex stretching. We study finite time blowup of  \eqref{eq:DG} in the first region $a<1$ and the long time behavior of the solution of \eqref{eq:DG} in the second region $a> 1$.

\subsection{Main results}

We first introduce some weighted norms and spaces. 
\begin{definition}[Weighted norms and spaces]\label{def}
Define the singular weight $\rho =( \sin \f{x}{2})^{-2} $, the weighted norms $||\cdot ||_{\cH}, ||\cdot ||_X$ as follows
\beq\label{eq:XHnorm}
|| f||_{\cH}^2 \teq \f{1}{4\pi} \int_{-\pi}^{\pi} \f{ |f_{x}|^2}{\sin^2 \f{x}{2} } d x ,
\quad  || f ||^2_{ X } \teq || f||_{\cH}^2 + \int_{-\pi}^{\pi} |f_{x x }|^2 \cos^2 \f{x  }{2}  d x ,
\eeq
and the Hilbert spaces $\cH, X$  
\[
\cH \teq \{ f| f(0 )=0, ||f||_{\cH } < +\infty \} ,\quad X \teq \{ f | f(0) = 0 , ||f||_X <+\infty \}
\]
with inner products $\la \cdot, \cdot \ra_{\cH}, \la \cdot , \cdot \ra_X$ induced by the $\cH, X$ norm. 
\end{definition}

The $\cH$ norm and inner product $\la \cdot , \cdot \ra_{\cH}$ were first introduced in \cite{lei2019constantin} for stability analysis.

Throughout this paper, we choose the gauge $u(t, 0) \equiv 0$ for the velocity field. The authors in \cite{Sve19} showed that solutions under different gauges are equivalent up to translations.

Our first main result is the existence of a family of self-similar solutions. 
\begin{thm}\label{thm:profile}
There exists an absolute small constant $\d_1$ such that for $1 - \d_1 < a < 1 + \d_1$, the gCLM 
\eqref{eq:DG} with parameter $a$ admits a self-similar solution  
\[
\om(x, t) = \f{1}{1 + c_{\om, a} t} \om_a(x) 
\] 
with an odd profile $\om_a$ and scaling parameter $c_{\om, a}$ satisfying 
\beq\label{eq:err_prof}
 ||  \om_a + \sin x  ||_X \les |1-a|, \quad | {c}_{\om,a} - (a-1) | \leq \min( C |1-a|^2, \f{1}{2} |1-a| )
 \eeq
for some absolute constant $C>0$. In particular, for $1-\d_1 < a < 1$, ${c}_{\om,a} < 0$  and $\om(x, t)$ blows up in finite time $ T = -\f{1}{ {c}_{\om,a}}$. For $a=1$, $ c_{\om, a} = 0$ and ${\om}_1 = -\sin x $. For $1 < a< 1 + \d_1$,  $ {c}_{\om, a} > 0$ and $\om(x, t)$ exists globally with $O(t^{-1})$ decay rate. 

Moreover, $\om_a \notin C^{\al}$ for $1< a< 1 + \d_1$ and any $\al(a) + 1 < \al < 1$, while $\om_a \in C^1$ but $\om_a \notin C^{1, \al }$ for $1-\d_1 <a< 1$ and any $ \al(a) < \al < 1$, where $\al(a)$ is given by
\beq\label{eq:hol_exp}
\al(a) = \f{  c_{\om,a} + (1-a) H \om_a( \pi) }{ a H\om_a(\pi)} 
\eeq
with $| H \om_a(\pi) +1| < \f{1}{10}$, $|\al(a) - 2(1-a) | \leq \min( \f{1}{2}|a-1|, C|a-1|^2)$ and $sign(\al(a)) = sign(1-a)$.

\end{thm}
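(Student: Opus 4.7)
The self-similar ansatz $\omega(x,t)=(1+ct)^{-1}\omega_a(x)$ reduces \eqref{eq:DG} to the profile equation
\begin{equation*}
-c\,\omega_a+a\,u_a\,\omega_a'=u_{a,x}\,\omega_a,\qquad u_{a,x}=H\omega_a,\quad u_a(0)=0,\quad \omega_a\text{ odd}.
\end{equation*}
The natural starting point is the De Gregorio equilibrium $(\omega_1,c_{\omega,1})=(-\sin x,0)$, whose nonlinear stability in $\cH$ was established in \cite{Sve19,lei2019constantin}. Substituting $\omega_a=-\sin x+g$, $u_a=\sin x+v$ with $v_x=Hg$ and $v(0)=0$, and collecting terms yields
\begin{equation*}
\mathcal{L}g \;=\; -c\sin x+(a-1)\sin x\cos x+\mathcal{R}(g,c,a),
\end{equation*}
where $\mathcal{L}g:=\sin x\cdot g'-\cos x\cdot g+\sin x\cdot Hg-\cos x\cdot v$ is the linearization of the De Gregorio profile operator around $-\sin x$ and $\mathcal{R}$ is at least quadratic in the small quantities $(g,c,a-1)$. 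My plan is to solve for the pair $(g,c)$ in the odd sector of $X\times\mathbb{R}$ by a contraction / implicit-function argument around $(0,a-1)$.

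The central analytic input is an $X$-coercivity estimate for $\mathcal{L}$, upgraded from the $\cH$-coercivity of \cite{lei2019constantin} by differentiating the profile equation once and running a second-order weighted energy estimate against $\cos^2(x/2)$. The one-parameter scaling ambiguity $(\omega,c)\mapsto(\lambda\omega,\lambda c)$ has tangent $(-\sin x,0)$ at the base point, and is automatically removed by working in $X$: membership in $\cH$ forces $g'(0)=0$ whereas $(\sin x)'(0)=1$. The scalar determining $c$ comes from a compatibility check at $x=0$. A direct computation using $g(0)=g'(0)=0$, $v(0)=0$ and $v'=Hg$ shows $\mathcal{L}g(0)=(\mathcal{L}g)'(0)=0$ identically for $g\in X$, so differentiating the displayed equation at $x=0$ gives
\begin{equation*}
-c+(a-1)+\mathcal{R}'(0)=0,\qquad \mathcal{R}'(0)=O\bigl(\|g\|_X^2+|a-1|\,\|g\|_X\bigr).
\end{equation*}
A contraction in the ball $\{\|g\|_X\leq C|a-1|\}$, with $c$ slaved to $g$ through this scalar relation, produces a unique fixed point, yielding $\|g\|_X\lesssim|a-1|$ and $|c_{\omega,a}-(a-1)|\lesssim|a-1|^2$; shrinking $\delta_1$ enforces the second alternative in \eqref{eq:err_prof}. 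The sign of $c_{\omega,a}$ then agrees with that of $a-1$, giving finite-time blowup for $a<1$ and global existence with $O(t^{-1})$ decay for $a>1$ directly from the ansatz.

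The Hölder regularity at $x=\pi$ is extracted from the ODE form $\omega_a'=(H\omega_a+c_{\omega,a})\omega_a/(a\,u_a)$. Since $u_a$ is odd and $2\pi$-periodic, $u_a(\pi)=0$; moreover $u_a'(\pi)=H\omega_a(\pi)$, and the smallness of $g$ in $X$ forces $|H\omega_a(\pi)+1|=|Hg(\pi)|<1/10$, so $u_a$ has a simple zero at $\pi$. Substituting the Frobenius ansatz $\omega_a(\pi-y)\sim C|y|^\beta$ and matching leading powers gives
\begin{equation*}
\beta=\frac{H\omega_a(\pi)+c_{\omega,a}}{a\,H\omega_a(\pi)}=\alpha(a)+1,
\end{equation*}
which is exactly \eqref{eq:hol_exp}; inserting $c_{\omega,a}=(a-1)+O(|a-1|^2)$ and $H\omega_a(\pi)=-1+O(|a-1|)$ yields $\alpha(a)=2(1-a)+O(|a-1|^2)$ and $\mathrm{sgn}(\alpha(a))=\mathrm{sgn}(1-a)$, from which the claimed $C^1\setminus C^{1,\alpha}$ dichotomy for $a<1$ and $C^\alpha$-failure for $a>1$ follow by standard one-dimensional regularity arguments. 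The principal obstacle is the $X$-level coercivity of $\mathcal{L}$ and closing the second-order energy estimate at $x=\pi$: the weight $\cos^2(x/2)$ degenerates exactly where $\omega_a$ has its limited Hölder regularity, so the commutator of $\partial_x$ with the transport $\sin x\cdot\partial_x$ and with the nonlocal $H$ must be handled without spoiling the quadratic bound on $c_{\omega,a}-(a-1)$ that feeds back into the Hölder exponent formula.
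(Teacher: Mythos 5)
Your peripheral steps track the paper closely: the regularity analysis at $x=\pi$ is essentially the paper's argument (it writes the steady equation as the ODE $au\omega_x=(c_\omega+u_x)\omega$, integrates from $x_0=\pi/8$ with the integrating factor, uses $u\in C^{1,1/3}$ from the $X$-bound and Sobolev embedding to control the error $|x-\pi|^{-2/3}$, and extracts exactly your exponent $(c_{\omega,a}+H\omega_a(\pi))/(aH\omega_a(\pi))=\alpha(a)+1$); your compatibility relation at $x=0$, namely $c=(a-1)u_a'(0)$, is precisely the paper's normalization condition $c_\omega=(a-1)u_x(0)$; and your second-order energy against $\cos^2(x/2)$ is equivalent to the paper's $\|\rho^{1/2}\partial_x D_x\omega\|_{L^2}$ with $D_x=\sin x\,\partial_x$, since $\rho^{1/2}\partial_x D_x f=2\cos\frac{x}{2}f_{xx}+O(|f_x|\rho^{1/2})$. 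The genuine gap is in the central construction step: the contraction/implicit-function argument in $X$ does not close as stated. The quadratic remainder $\mathcal{R}(g,c,a)$ contains the transport term $v\,g'$, and its $X$-norm involves $(vg')_{xx}\supset v\,g'''$, which is not controlled by $\|g\|_X$; so $\mathcal{R}$ is not a bounded quadratic map of $X$ into itself, and the iteration $g\mapsto\mathcal{L}^{-1}[F-\mathcal{R}(g)]$ loses a derivative at each step. Nor can one expect $\mathcal{L}^{-1}$ to restore the loss: the transport coefficient $\sin x$ degenerates at $x=\pi$, and the theorem itself shows no smoothing is available there ($\omega_a\notin C^{1,\alpha}$ for $\alpha>\alpha(a)$; for $a>1$ the profile barely lies in the space). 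Separately, dissipativity of the non-self-adjoint $\mathcal{L}$ in your $X$-energy gives injectivity but not surjectivity, and the proposal supplies no maximality/semigroup input to invert $\mathcal{L}$ on the odd sector.

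The paper circumvents both problems by never inverting a linearized operator. It runs the dynamic rescaling flow $\omega_t+au\omega_x=(c_\omega+u_x)\omega$, $c_\omega=(a-1)u_x(0)$, starting from $-\sin x$, and proves the uniform bootstrap bound $\|\omega+\sin x\|_X\lesssim|a-1|$ by a priori energy estimates in which the top-order transport is removed by integration by parts (the identity $\langle\sin x\,f_x,f\rho\rangle=\frac12\langle f^2,\rho\rangle$, so $\sin^{-2}\frac{x}{2}$ is effectively an eigenfunction of the adjoint of $\sin x\,\partial_x$), the nonlocal top-order term is tamed by the constant commutator $[D_x,H]f=\frac{1}{2\pi}\langle f,\sin x\rangle$ together with the Tricomi-type identity, and oddness supplies $u(\pi)=0$ so that $\|u\omega_{xx}\rho^{1/2}\|_{L^2}\lesssim\|u_x\|_{L^\infty}\|\omega\|_X$ despite the degeneracy at $\pi$ — the very obstacle you flag but do not resolve. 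Convergence then comes from a time-differentiation argument: since the error $F(\bar\omega)$ is time-independent, $\omega_t$ solves the linearized equation with no forcing and decays exponentially in the weaker $\mathcal{H}$-norm (the a priori $X$-bound controlling the quadratic terms such as $u_t\omega_{xx}$), which yields strong $\mathcal{H}$-convergence to a steady state and, by weak compactness, $\omega_a\in X$. In effect, the flow realizes the inversion you need as $\int_0^\infty e^{t\mathcal{L}}dt$ without ever requiring $\mathcal{R}$ to be bounded on $X$; to salvage your static route you would need a Nash--Moser-type scheme or tame estimates accounting for the degenerate point, none of which is sketched.
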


The second main result addresses the stability of these profiles and the regularizing effect of the advection term.

\begin{thm}\label{thm:stability}
Let $(\om_a, c_{\om, a})$ be the profile and scaling in Theorem \ref{thm:profile}. There exist absolute small constants $\d_2, \d_3$ with $\d_2 < \d_1$, such that if $ \om_0$ is odd and satisfies 
$ || \om_0 + \sin x  ||_{\cH} < \d_3$, 
the following statements hold true for the solution $\om(x,t)$ of \eqref{eq:DG} with parameter $a$ and initial data $\om_0$.

(a) For $ 1 - \d_2 <  a < 1$, $\om(x, t)$ develops a singularity in finite time $T$ with lifespan $ T \geq \f{1}{ 2|1-a|}$. Moreover, $\om(x, t)$ is asymptotically self-similar and satisfies 
\[
|| \om( t) - \f{1}{\lam(t)}  \om_a ||_{\cH} \leq \lam(t)^{  \f{1}{4|a-1|} } || \om_0 - \om_a ||_{\cH}, 
\]
where $\lam(t) \leq 1$ is decreasing with $ \f{\lam(t) }{ T-t} \to - {c}_{\om, a} > 0 $ as $t \to T$. 


(b) For $ 1 < a < 1 + \d_2$, $\om(x,t)$ exists globally, decays for large time and satisfies
\[ 
 || \om( t) - \f{1}{\lam(t)} {\om}_a ||_{\cH} \leq \lam(t)^{- \f{1}{4 |1-a|}}
 || \om_0 - \om_a ||_{\cH},
\]
where $\lam(t) \geq 1$ is increasing with $\f{ \lam(t)}{t} \to  {c}_{\om, a} > 0$ as $ t \to \infty$. Moreover, if $\om_0 \in H^s$ for $s > \f{3}{2}$, then for any $0 < \g < \g(a) $
\[
t^{ \g } |\om_{0,x}(\pi)|  \les_{a, \g} | \om_x(\pi, t)| \leq  || \om_x||_{L^{\infty}} 
\les || \om||_{H^s}
\]
for all $t > 0$, where $\g(a)= \f{ | (a-1) H \om_a( \pi)| }{ c_{\om, a}} $ with $|\g(a)-1|\les |a-1|$.

In particular, the solution of \eqref{eq:DG} with parameter $ 1-\d_2 < a < 1$ develops a singularity in finite time for some $C^{\infty}$ initial data.

\end{thm}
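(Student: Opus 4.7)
The plan is to linearize around the self-similar profile $\omega_a$ in a dynamic rescaling formulation, prove nonlinear stability in the $\cH$ norm by perturbing the coercive energy identity of Lei--Liu--Ren \cite{lei2019constantin} for the De Gregorio model, and translate the stability result back to physical time to deduce (a) finite-time blowup, (b) global existence with $O(t^{-1})$ decay, and the growth estimate on $\omega_x(\pi,t)$. Write $\omega(x,t)=\lambda(t)^{-1}W(x,\tau)$ with $d\tau/dt=1/\lambda(t)$, so that $W$ evolves by $W_\tau=c_\omega(\tau)W+U_xW-aUW_x$, $U_x=HW$, $U(0)=0$, $c_\omega=(\log\lambda)_\tau$; the profile of Theorem \ref{thm:profile} is a steady state with $c_\omega\equiv c_{\omega,a}$. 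Determine $c_\omega(\tau)$ (and hence $\lambda$) by the normalization $\langle W-\omega_a,\omega_a\rangle_\cH\equiv 0$, which kills the scaling neutral direction, and set $\eta=W-\omega_a$. Subtracting the profile equation gives
\[
\eta_\tau=\cL_a\eta+(c_\omega-c_{\omega,a})\omega_a+\cN(\eta),
\]
where $\cL_a\eta\teq c_{\omega,a}\eta+(H\omega_a)\eta+(H\eta)\omega_a-aU_a\eta_x-a\tilde u\,\omega_{a,x}$ and $\cN(\eta)=(H\eta)\eta-a\tilde u\,\eta_x$, with $\tilde u_x=H\eta$ and $\tilde u(0)=0$.

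At $a=1$ the profile is $\omega_1=-\sin x$, and the inner-product identity of \cite{lei2019constantin} shows that $\langle\cL_1\eta,\eta\rangle_\cH$ is a nonpositive quadratic form whose two-dimensional kernel consists of the scaling mode $-\sin x$ and the translation mode $\cos x$. Restricting to odd perturbations eliminates $\cos x$, and the normalization eliminates $-\sin x$, producing a strict damping $\langle\cL_1\eta,\eta\rangle_\cH\leq-\kappa_0\|\eta\|_\cH^2$ for an absolute constant $\kappa_0>0$. For $a$ close to $1$, split $\cL_a=\cL_1+(\cL_a-\cL_1)$ and bound the correction by $O(|1-a|+\|\omega_a+\sin x\|_X)=O(|1-a|)$ via Theorem \ref{thm:profile}, yielding $\langle\cL_a\eta,\eta\rangle_\cH\leq-(\kappa_0/2)\|\eta\|_\cH^2$ uniformly in $a\in(1-\delta_2,1+\delta_2)$. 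The nonlinear term is controlled by Hardy-type inequalities associated with the weight $\rho=(\sin(x/2))^{-2}$, giving $|\langle\cN(\eta),\eta\rangle_\cH|\lesssim\|\eta\|_\cH^3$, and a bootstrap with $\|\eta(0)\|_\cH<\delta_3\ll\kappa_0/C$ yields $\|\eta(\tau)\|_\cH\leq e^{-\kappa_0\tau/2}\|\eta(0)\|_\cH$ for all $\tau\geq 0$.

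Since $c_\omega\to c_{\omega,a}$ at rate $O(\|\eta\|_\cH)$, one has $\lambda(t)\sim\exp(c_{\omega,a}\tau)$, and therefore $\|\omega(t)-\omega_a/\lambda\|_\cH=\|\eta\|_\cH/\lambda(t)\lesssim\lambda(t)^{-1-\kappa_0/(2c_{\omega,a})}\|\eta(0)\|_\cH$; using $|c_{\omega,a}-(a-1)|\lesssim|1-a|^2$ from Theorem \ref{thm:profile}, this matches $\lambda(t)^{\pm 1/(4|1-a|)}$ with the sign dictated by $\mathrm{sgn}(c_{\omega,a})$, once $\kappa_0\geq 1/2$. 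For (a), $c_{\omega,a}<0$ forces $\lambda(t)\to 0$ in finite time $T\geq 1/(2|1-a|)$; for (b), $c_{\omega,a}>0$ forces $\lambda(t)\sim c_{\omega,a}t$ and hence $O(t^{-1})$ decay. For $\omega_x(\pi,t)$ in (b): oddness and $2\pi$-periodicity give $\omega(\pi,t)=u(\pi,t)=0$, so differentiating \eqref{eq:DG} at $x=\pi$ produces the closed ODE
\[
\frac{d}{dt}\omega_x(\pi,t)=(1-a)\,H\omega(\pi,t)\,\omega_x(\pi,t).
\]
The stability estimate together with $|H\omega_a(\pi)+1|<1/10$ gives $H\omega(\pi,t)=\lambda(t)^{-1}(H\omega_a(\pi)+o(1))$ as $t\to\infty$; combined with $\lambda(t)\sim c_{\omega,a}t$, this yields $\int_0^t(1-a)H\omega(\pi,s)\,ds=\gamma(a)\log t+O(1)$, so $t^{\gamma}|\omega_{0,x}(\pi)|\lesssim|\omega_x(\pi,t)|$ for every $\gamma<\gamma(a)$, while the upper bound is the Sobolev embedding $H^s\hookrightarrow C^1$ for $s>3/2$. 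The final assertion about $C^{\infty}$ initial data is immediate because any smooth odd function close to $-\sin x$ in $\cH$ satisfies the hypothesis.

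The main obstacle is producing the coercivity $\langle\cL_a\eta,\eta\rangle_\cH\leq-(\kappa_0/2)\|\eta\|_\cH^2$ with damping constant $\kappa_0$ uniform in $a$. This requires quantifying the spectral gap of the De Gregorio linearization on the codimension-one subspace of odd functions in $\cH$ normalized against $\omega_1$, and controlling the perturbation $\cL_a-\cL_1$ when the profile $\omega_a$ is only known in the stronger $X$ norm; the norm mismatch is absorbed via $\omega_a+\sin x=O_X(|1-a|)$ from Theorem \ref{thm:profile}. A secondary difficulty is choosing $\delta_3$ independently of $a\in(1-\delta_2,1+\delta_2)$, which requires all nonlinear constants in the energy inequality to be absolute.
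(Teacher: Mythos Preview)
Your overall strategy---dynamic rescaling, linearization around $\omega_a$, perturbation of the Lei--Liu--Ren coercivity, translation back to physical time, and the ODE at $x=\pi$ for the growth of $\omega_x$---is exactly the route the paper takes. However, two points in your write-up are genuinely wrong and would block the proof as written.

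\textbf{The normalization and the ``kernel'' of $\cL_1$.} Your orthogonality condition $\langle W-\omega_a,\omega_a\rangle_{\cH}=0$ is ill-defined: $\omega_a\notin\cH$, because $\omega_{a,x}(0)\approx -1\neq 0$ and hence $\omega_{a,x}\rho^{1/2}\notin L^2$ (indeed already $-\sin x\notin\cH$). For the same reason your description of the kernel of $\cL_1$ is incorrect: the scaling mode $-\sin x$ is not in $\cH$ at all, so there is no scaling neutral direction to project out. The only problematic direction is the even function $e_0=\cos x-1$, which the $Y$ inner product removes and which oddness removes automatically. Thus for odd $\eta\in\cH$ the Lei--Liu--Ren estimate $\langle\cL_1\eta,\eta\rangle_{\cH}\leq -\tfrac{3}{8}\|\eta\|_{\cH}^2$ holds with no further normalization. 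The paper fixes $c_\omega$ by the pointwise condition $c_\omega=(a-1)u_x(0)$, which is well-defined and makes the forcing term $(c_\omega-c_{\omega,a})\omega_a$ disappear in favor of a contribution already inside $\cL_a$.

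\textbf{The role of oddness in bounding $\cL_a-\cL_1$.} You say the correction is $O(|1-a|)$ via $\|\omega_a+\sin x\|_X\lesssim|1-a|$, but this alone is not enough. The linearization around $\omega_a$ contains the term $-a\,\tilde u\,\omega_{a,x}$, and after taking $\partial_x$ (as the $\cH$ inner product requires) one meets $\tilde u\,\omega_{a,xx}$. By Theorem~\ref{thm:profile} the profile $\omega_a$ is only $C^1$ near $x=\pi$ (it fails $C^{1,\alpha}$ for $\alpha>\alpha(a)$), so $\omega_{a,xx}$ is singular there and cannot be controlled in $L^2(\rho)$ by the $X$ norm unless $\tilde u(\pi)=0$. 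This vanishing is precisely what oddness provides, and is the actual reason the odd hypothesis is imposed; the paper uses it explicitly via the estimate $\|\tilde u\,\omega_{a,xx}\,\rho^{1/2}\|_{L^2}\lesssim\|\tilde u/\sin x\|_{L^\infty}\|\cos(x/2)\,\omega_{a,xx}\|_{L^2}$. Your proposal identifies oddness only as a device to kill the translation mode and to derive the ODE at $\pi$, missing this more delicate use.

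Once you replace your normalization by $c_\omega=(a-1)u_x(0)$ and insert the $u(\pi)=0$ argument to handle $\tilde u\,\omega_{a,xx}$, the rest of your outline (nonlinear closure, rescaling to physical variables, asymptotics of $\lambda(t)$, and the ODE for $\omega_x(\pi,t)$) matches the paper.
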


Theorem \ref{thm:stability} resolves the endpoint case of the conjecture made in \cite{lushnikov2020collapse,Elg17,okamoto2014steady} that the solution of gCLM \eqref{eq:DG} develops a finite time singularity for $a < 1$ from smooth initial data in the case of a circle. A related conjecture was stated in \cite{OSW08}. For \eqref{eq:DG} on the real line with $a=1$, finite time singularity with $C_c^{\infty}$ initial data has been established by Chen-Hou-Huang in \cite{chen2019finite}. 

The odd assumption on the initial data can be dropped with a price of losing the convergence estimates.

\begin{thm}\label{thm:weak}
There exist absolute small constants $\d_4, \d_5$, such that if $ || \om_0 + \sin x  ||_{\cH} < \d_5$ and $ \int_{S^1} \om_0  d \th =0$, the following statements hold true for the solution $\om(x,t)$ of \eqref{eq:DG} with parameter $a$ and initial data $\om_0$.

(a) For $1-\d_4 < a< 1$, $\om(x,t)$ develops a singularity in finite time $T$ with 
$|| \om(\cdot, t) + \lam(t)^{-1} \sin x ||_{\cH} < 2 \lam(t)^{-1} \d_5$, where $ \lam(t) \leq 1$ is decreasing and $ | \f{\lam(t) }{T-t} - |1-a| | \leq \f{|1-a|}{2} $.


(b) For $ 1<a< 1+\d_4$,  $\om(x,t)$ exists globally and decays for large time with 
$
|| \om(\cdot, t) + \lam(t)^{-1} \sin x ||_{\cH} <2  \lam(t)^{-1} \d_5$
where $ \lam(t) \geq 1$ is increasing and $ | \f{\lam(t) }{t} - |1-a| | \leq \f{|1-a|}{2}$.
\end{thm}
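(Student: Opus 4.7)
The proof follows the dynamic-rescaling and bootstrap strategy of Theorem \ref{thm:stability}, with a single-parameter modulation rather than a two-parameter one, since the theorem's conclusion only tracks the scaling $\lambda(t)$ and not a translation. Introduce the rescaled variable $W(x,\tau) = \lambda(t)\omega(x,t)$ with $\tau = \int_0^t \lambda(s)\,ds$, so that
\[
W_\tau + a u^W W_x = u^W_x W + c_\omega(\tau) W, \qquad c_\omega(\tau) = -\lambda_\tau/\lambda,
\]
and decompose $W = -\sin x + g$. The target bootstrap is $\| g(\tau)\|_\cH < 2\delta_5$, which via $\omega = \lambda^{-1}(-\sin x + g)$ is exactly the $\cH$ bound in the theorem.

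Fix the modulation by imposing one scalar normalization that removes the scaling kernel direction, e.g.\ $\langle g(\cdot,\tau),\sin x\rangle_\cH = 0$. Using the estimate $\|\omega_a+\sin x\|_X \lesssim |1-a|$ from Theorem \ref{thm:profile}, one obtains $|c_\omega(\tau) - (a-1)| \leq |1-a|/2$, which after integration produces the claimed $|\lambda(t)/(T-t) - |1-a|| \leq |1-a|/2$ in the $a<1$ case and $|\lambda(t)/t - |1-a|| \leq |1-a|/2$ in the $a>1$ case. The zero-mean hypothesis $\int_{S^1}\omega_0\,d\theta=0$ is preserved by the flow (integrate the PDE and use skew-adjointness of $H$ on mean-zero functions), placing $g$ in the subspace on which the $\cH$-coercivity framework of \cite{lei2019constantin,Sve19} can be used.

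The analytic core is a single $\cH$ energy estimate for $g$. Combining the coercivity of the De Gregorio linearization around $-\sin x$ with (a) the profile mismatch $\|\omega_a+\sin x\|_X = O(|1-a|)$, (b) the extra advection term $(a-1)u^W W_x$ coming from $a\neq 1$, and (c) the standard trilinear nonlinearity in $g$, one derives
\[
\tfrac{d}{d\tau}\|g\|_\cH^2 \leq C|1-a|\,\|g\|_\cH^2 + C|1-a|\,\|g\|_\cH + C\|g\|_\cH^3.
\]
In the $a<1$ regime the solution exists until a finite time $T$, corresponding to a bounded $\tau$-horizon of length $O(1/|1-a|)$, so the total multiplicative growth of $\|g\|_\cH^2$ coming from the first term is bounded by $e^{O(1)}$; for $\delta_4$ and $\delta_5$ chosen small enough this closes the bootstrap $\|g\|_\cH < 2\delta_5$. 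In the $a>1$ regime, $\tau$ ranges over $[0,\infty)$, but the only non-dissipative mode of the linearized operator is the translation direction $\cos x$, which remains at worst weakly neutral (drifting at rate $O(|1-a|)$); the other Fourier components are strictly dissipated by the $\cH$-coercivity, so $\|g\|_\cH$ stays bounded along the evolution.

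The principal obstacle is extending the Lei--Liu--Ren $\cH$-coercivity identity from odd perturbations, for which the kernel of the linearization reduces to $\sin x$, to general mean-zero perturbations, for which the translation direction $\cos x$ is admissible. The key technical fact to verify is that the even part of $g$ contributes only a $O(|1-a|)\|g\|_\cH^2$ term to $\langle \mathcal{L}g,g\rangle_\cH$ at leading order, consistent with $\cos x$ being a genuine null direction of the De Gregorio linearization. This weak neutrality is precisely why Theorem \ref{thm:weak} yields only a modulated boundedness bound, rather than the convergence estimate of Theorem \ref{thm:stability}. Inverting the rescaling then converts the bootstrap on $\|g\|_\cH$ into the statement of Theorem \ref{thm:weak}.
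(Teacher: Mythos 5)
There is a genuine gap, and it sits at the analytic core of your argument: the energy inequality you state omits the linear damping term, and without it the bootstrap cannot close. The paper's estimate for general mean-zero (not necessarily odd) perturbations around $-\sin x$, assembled from \eqref{eq:H1_est2}, \eqref{eq:H1_N} and \eqref{eq:F_est}, reads
\[
\f{1}{2}\f{d}{dt}\| \om\|_Y^2 \leq \Big(-\f{3}{8}+C_2|a-1|\Big)\|\om\|_Y^2 + C_2\|\om\|_Y^3 + C_2|a-1|\,\|\om\|_Y,
\]
and the coercive term $-\f{3}{8}\|\om\|_Y^2$, coming from $\la \cL_1 f, f\ra_Y \leq -\f{3}{8}\|f\|_Y^2$ in Lemma \ref{lem:linop}(c), is indispensable: it is what absorbs the $O(|a-1|)$ forcing uniformly on an \emph{infinite} time horizon. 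Your inequality $\f{d}{d\tau}\|g\|_{\cH}^2 \leq C|1-a|\|g\|_{\cH}^2 + C|1-a|\|g\|_{\cH} + C\|g\|_{\cH}^3$ has no negative term, and your attempted rescue --- that for $a<1$ the rescaled horizon is bounded of length $O(1/|1-a|)$ --- confuses physical with rescaled time. The finite blowup time $T \approx \f{1}{|1-a|}$ corresponds to $\tau \to \infty$: with the correct change of variables $d\tau = \lam^{-1}\,dt$ (note your $\tau = \int_0^t \lam(s)\,ds$ is the reciprocal of what makes $W$ solve the autonomous rescaled equation) and $\lam \sim |1-a|(T-t)$, one has $\tau \sim -\f{1}{|1-a|}\log(T-t) \to \infty$ as $t \to T$. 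Gr\"onwall applied to your inequality therefore gives unbounded growth of $\|g\|_{\cH}$, not an $e^{O(1)}$ factor, in \emph{both} regimes.

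Secondly, the ``principal obstacle'' you flag --- extending coercivity from odd to general mean-zero perturbations --- is not an open step, and it is not resolved the way you propose. Lemma \ref{lem:linop}(c), quoted from \cite{lei2019constantin}, already holds for all mean-zero $f \in \cH$; oddness in Theorem \ref{thm:stability} was needed only for the weighted $H^2$ analysis (to exploit $u(\pi)=0$ against $\om_{a,xx}$), which the proof of Theorem \ref{thm:weak} avoids entirely by perturbing around the smooth approximate steady state $-\sin x$ rather than around $\om_a$. The neutral even direction is handled not by verifying that it contributes $O(|1-a|)\|g\|_{\cH}^2$ to the quadratic form, but structurally: the whole estimate is run in the $Y$ inner product, which quotients out $e_0 = \cos x - 1$, and the norm equivalence $\f{1}{2}\|f\|_{\cH} \leq \|f\|_Y \leq \|f\|_{\cH}$ is precisely the source of the factor $2$ in the bound $2\lam(t)^{-1}\d_5$. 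Relatedly, your modulation condition $\la g, \sin x\ra_{\cH} = 0$ replaces the paper's normalization $c_{\om} = (a-1)u_x(0)$ of \eqref{eq:normal}, yet you never derive $|c_{\om}(\tau)-(a-1)| \leq |1-a|/2$ from it: invoking $\|\om_a + \sin x\|_X \les |1-a|$ is a non sequitur, since $\om_a$ plays no role in your decomposition. Under the paper's normalization the bound is immediate from the bootstrap, since $|c(\tau)-(a-1)| = |a-1|\,|u_x(0)| \les |a-1|\,\|g\|_{\cH} \leq \f{|1-a|}{10}$ for $\d_5$ small, which then feeds into \eqref{eq:lamt} to give the stated estimates on $\lam(t)/(T-t)$ and $\lam(t)/t$.
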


\begin{remark}
For the case of $a=1$, stability results similar to those in Theorems \ref{thm:stability} and \ref{thm:weak} have been established in \cite{lei2019constantin,Sve19}. It was first proved in \cite{Sve19} that the equilibrium $-\sin x$ of \eqref{eq:DG} is nonlinear stable and the solution $\om(x,t)$ converges to $-\sin x$ exponentially fast in the $H^s$ norm for $s < \f{3}{2}$. Alternative proof of nonlinear stability was obtained later in \cite{lei2019constantin} and convergence in the $\cH$ norm was established : $|| \om(x, t) + \sin x ||_{\cH} \les e^{-\b t}|| \om_0(x, t) + \sin x ||_{\cH}$ for any $\b \in (0, \f{3}{8})$.  
\end{remark}

\begin{remark}
Theorems \ref{thm:stability}, \ref{thm:weak} can be generalized to initial data close to $A \sin \th$ for $A > 0$, which can be established by applying Theorems \ref{thm:stability}, \ref{thm:weak} to $\om_A(x, t) \teq A \om(x, A t)$.
\end{remark}

\begin{remark}

The lifespan of the singular solution in result (a) in Theorem \ref{thm:stability} is at least $\f{1}{2(1-a)}$, which grows to $\infty$ as $a \to 1^-$. Thus it is hard to observe and resolve this singularity numerically.
\end{remark}

Result (b) in Theorem \ref{thm:stability} shows that the singularity developed from the solution of \eqref{eq:DG} with $a$ less than $1$ can be regularized as the strength of advection increases.
We remark that the regularizing effect of advection has also been studied by Hou-Li \cite{hou2008dynamic} and Hou-Lei \cite{lei2009stabilizing} for the 3D axisymmetric Navier-Stokes equations.
Result (b) also confirms the numerical evidence in \cite{lushnikov2020collapse} that the solution of \eqref{eq:DG} with $a$ slightly larger than $1$ exists globally for some initial data with decay of $||\om||_{L^{\infty}}$ and unbounded growth of $||\om_x||_{L^{\infty}}$ as $t \to \infty$. In addition, it relates to small scale creation in the solution which can be measured in the $H^s$ norm for $s > \f{3}{2}$. A similar observation was made in \cite{Sve19} for the solution of \eqref{eq:DG} with $a=1$.

The qualitative behavior of the solution of \eqref{eq:DG} for various $a$ in Theorems \ref{thm:stability}, \ref{thm:weak} can be characterized by the following simple ODE
\[
 \f{d}{dt} f(t) = (1-a) f(t)^2, \quad f(0) = 1,
\]
which has a solution $f(t) = \f{1}{ 1 - (1-a) t}$. For $a<1$, $f(t) $ blows up in finite time $T = \f{1}{1-a}$. For $a>1$, $f(t)$ exists globally with $O(t^{-1})$ decay rate. 


\begin{remark}
As we will see, the profile $\om_a$ is not smooth near $x = \pi$. We impose the odd assumption on the initial data in Theorem \ref{thm:stability} so that we can estimate $ u\om_{a,xx}$ using the vanishing condition $u(\pi) = 0$ that compensates the nonsmoothness of $\om_a$. 
\end{remark}

\begin{remark}
Local well-posedness of \eqref{eq:DG} in $H^s$ for any $s > \f{3}{2}$ can be proved by the Kato-Ponce commutator estimate \cite{kato1988commutator} and energy estimate. We will use the dynamic rescaling formulation discussed in Section \ref{sec:dsform} to perform a-priori estimate on the rescaled perturbation $\om(x,t) - \f{1}{\lam(t)}  \om_a$, which implies that the perturbation remains in the functional space $\cH$ locally in time.
\end{remark}

\subsection{The main ideas and the outline of the proofs}
\subsubsection{ Competition between advection and vortex stretching }
The fundamental idea in the proofs is to study the evolution of \eqref{eq:DG} using the dynamic rescaling equation 
\beq\label{eq:DG_dy0}
\om_t + a u \om_x = (c_{\om} + u_x) \om
\eeq
with the normalization condition $c_{\om}(t) = (a-1)u_x(t, 0)$. Formally $a-1$ characterizes the relative strength of the advection term $a u \om_x$ and the vortex stretching term $u_x \om$. A crucial observation is that if $u_x(t, 0)$ has a fixed sign for all $t > 0$, then the sign of  $c_{\om}$ captures the competition between these two terms.
In particular, if the solution $\om(x, t)$ of \eqref{eq:DG_dy0} is close to a nontrivial profile and $u_x(0) \geq c >0$ for all $t >0$, then the sign of $c_{\om}$ determines the long time behavior of the solution $\om^{phy}$ of \eqref{eq:DG} after rescaling the solution $\om(x, t)$. In particular, we can prove :

\begin{itemize}
	\item  If $c_{\om}(t) \leq -|1-a| c < 0$ for all $t > 0$, $\om^{phy}$ develops a singularity in finite time.

\item  If $c_{\om}(t)=0$ for all $t >0$, $\om^{phy}$ remains close to the profile for all time. 

\item If $c_{\om}(t) \geq |1-a|c > 0$ for all $t > 0$, $\om^{phy}$ exists globally and decays for large time. 
\end{itemize}

See more discussion on the dynamic rescaling formulation in Section \ref{sec:dsform}.

\subsubsection{Outline of the proofs}
We close the introduction by sketching the steps in the proofs.


1. Reformulate \eqref{eq:DG} using the dynamic rescaling formulation.

2. We follow the method of analysis in \cite{chen2019finite} to construct the self-similar profile $ \om_a$ and scaling $ c_{\om, a}$ of \eqref{eq:DG} for $a$ close to $1$. There are several steps in this method. Firstly, construct an approximate steady state of the dynamic rescaling equation (approximate self-similar profile). To our surprise, the equilibrium $ -\sin x$ of equation \eqref{eq:DG} with $a=1$ provides a good approximate self-similar profile. Secondly, perform nonlinear stability analysis of the approximate steady state in some suitable weighted Sobolev norm. We will use the coercivity estimates of a linearized operator established in \cite{lei2019constantin} to perform stability analysis in the weighted $H^1$ space $\cH$. Then we further establish stability analysis in the weighted $H^2$ space $X$ using the energy method. We remark that some weighted Sobolev spaces with singular weights have been used in \cite{Sve19,lei2019constantin,chen2020singularity,Elg19,chen2019finite} for the nonlinear stability analysis of \eqref{eq:DG}. Finally, establish convergence of the solution of \eqref{eq:DG_dy0} to a self-similar profile by time differentiation. Similar time-differentiation arguments have been developed in \cite{elgindi2019finite,chen2019finite}.


3. Establish nonlinear stability analysis of the profile $ \om_a$ in the weighted $H^1$ space $\cH$. Obtain the results about \eqref{eq:DG} with various $a$ by rescaling the solution. 


\vspace{0.1in}
\paragraph{\bf{Organization of the paper}}
In Section \ref{sec:profile}, we construct a family of self-similar profiles of \eqref{eq:DG} for $a$ close to $1$. In Section \ref{sec:stability}, we study the stability of these profiles and establish the convergence estimates in Theorem \ref{thm:stability}. In Section \ref{sec:weak}, we prove Theorem \ref{thm:weak}. We discuss an approach which has the potential to be applied to obtain finite time blowup for \eqref{eq:DG} with other $a<1$ in Section \ref{sec:approach}. In the Appendix, we prove some properties of the Hilbert transform and a functional inequality.

\vspace{0.1in}
\paragraph{\bf Notations}

We use $\la \cdot , \cdot \ra$ to denote the standard inner product on $S^1$:
$\la f , g \ra  \teq \int_{S^1} f g dx .$
 Recall $\cH$ in Definition \ref{def}. For $f, g \in \cH$, we have  
 \beq\label{nota:innerH}
\la f, g \ra_{\cH} = (4\pi)^{-1} \la f_x, g_x \rho \ra .
\eeq 

Without specification, $\int f dx$ means $\int_{S^1} f dx $. We use $C, C_i$ to denote absolute constants and $C(A,B,..,Z)$ to denote constant depending on $A,B ,..,Z$. These constants may vary from line to line, unless specified. We also use the notation $A \les B (A \gtrsim B)$ if there is some absolute constant $C$ such that $A \leq CB (A \geq CB)$.

\section{Construction of the self-similar profiles}\label{sec:profile}

In this Section, we prove Theorem \ref{thm:profile} using the strategy in \cite{chen2019finite}. We first perform weighted $H^2$ stability analysis of an approximate steady state in the dynamic rescaling equation and then establish convergence to the exact steady state, from which we can obtain the self-similar profile. 

\subsection{Dynamic rescaling formulation}\label{sec:dsform}

Let $ \om(x, t), u(x,t)$ be the solutions of equation \eqref{eq:DG}. It is easy to show that 
\beq\label{eq:rescal1}
  \td{\om}(x, \tau) = C_{\om}(\tau) \om(   x,  t(\tau) ), \quad   \td{u}(x, \tau) = C_{\om}(\tau) 
u( x, t(\tau))
\eeq
are the solutions to the dynamic rescaling equations
 \beq\label{eq:rescal2}
\bal
\td{\om}_{\tau}(x, \tau) +  a \td{u}  \td{\om}_x(x, \tau)  &=   c_{\om}(\tau) \td{\om} + \td{u}_x \td{ \om  } , \quad \td{u}_x &= H \td{\om} ,
\eal
\eeq
where  
\beq\label{eq:rescal3}
\bal
  C_{\om}(\tau) = \exp\lt( \int_0^{\tau} c_{\om} (s)  d \tau\rt) C_{\om}(0), \quad   t(\tau) = \int_0^{\tau} C_{\om}(\tau) d\tau.
\eal
\eeq
We have the freedom to choose the initial rescaling factors $ C_{\om}(0)$ and impose some normalization condition on the time-dependent scaling parameter $c_{\om}(\tau)$. Then the equation \eqref{eq:rescal2} is completely determined and the solution of \eqref{eq:rescal2} is equivalent to that of the original equation \eqref{eq:DG} using the scaling relationship given in \eqref{eq:rescal1}-\eqref{eq:rescal3}, as long as $c_{\om}(\tau)$ remain finite.

\begin{remark}
In \eqref{eq:rescal1}-\eqref{eq:rescal2}, we do not rescale the spatial variable. This is different from the dynamic rescaling equation in \cite{chen2019finite} which contains a factor $C_l(\tau)$ in \eqref{eq:rescal1} and a stretching term $c_l( \tau)x \om_x$ in \eqref{eq:rescal2}. Here, we simply choose $c_l(\tau) \equiv 0$ and $C_l(\tau) \equiv 1$.
\end{remark}

We remark that a similar dynamic rescaling formulation was employed in \cite{mclaughlin1986focusing,  landman1988rate} to study the nonlinear Schr\"odinger (and related) equation. In some literature, this formulation is called the modulation technique. It has been a very effective tool to study singularity formation for many problems like the nonlinear Schr\"odinger equation \cite{kenig2006global,merle2005blow}, the nonlinear wave equation \cite{merle2015stability}, the nonlinear heat equation \cite{merle1997stability}, the generalized KdV equation \cite{martel2014blow}, and other dispersive problems. 

Suppose $C_{\om}(0)=1$. If $c_{\om}(\tau) \leq -C <0$ for some $C>0$ and any $\tau > 0$ and the solution $\td{\om}$ is nontrivial, e.g. $ || \td{\om}(\tau, \cdot) ||_{L^{\infty}} \geq c >0$ for all $\tau >0$, we then have 
\[
C_{\om}(\tau) \leq e^{-C\tau}, \quad t(\infty) \leq \int_0^{\infty}  e^{-C \tau } d \tau =C^{-1} <+ \infty 
\]
and that 
\[
| \om( x,  t(\tau) ) | = C_{\om}(\tau)^{-1}  |\td{\om}(x, \tau) | \geq e^{C\tau} |\td{\om}(x, \tau) | 
\]
blows up at finite time $T = t(\infty)$. 

On the contrary, if $c_{\om}(\tau) \geq C >0$ for some $C>0$ and $\td \om(x, \tau)$ is bounded, e.g. $|| \td \om(x, \tau)||_{L^{\infty}} \leq c$, for any $\tau >0$, then we obtain 
\[
C_{\om}(\tau) \geq e^{C\tau}, \quad t(\tau) \geq \int_0^{\tau}  e^{C\tau}  d \tau
\]
and that
\[
| \om( x,  t(\tau) ) | = C_{\om}(\tau)^{-1}  |\td{\om}(x, \tau) | \leq e^{ - C\tau} | \td \om(x,\tau) |\leq e^{-C\tau} c
\] 
decays for large $\tau$. Due to the fact that $t(\tau) \to \infty$ as $\tau \to \infty$ and the above estimate on $\om$, we can obtain global existence of solution using Beale-Kato-Majda-type criterion.

If $(\td{\om}_{\tau}, c_{\om}(\tau))$ converges to a steady state $(\om_{\infty}, c_{\om,\infty})$ of \eqref{eq:rescal2} as $\tau \to \infty$, one can verify that 
\beq\label{eq:relation}
\om(x, t) = \f{1}{1 +  c_{\om,\infty} t} \om_{\infty}(x)
\eeq
is a self-similar solution of \eqref{eq:DG}. Due to this connection, we do not distinguish the steady state of \eqref{eq:rescal2} and the self-similar profile of \eqref{eq:DG}.

To simplify our presentation, we still use $t$ to denote the rescaled time in the rest of the paper, unless specified, and drop $\td{\cdot}$ in \eqref{eq:rescal2}, which leads to
\beq\label{eq:DGdy}
\bal
\om_t +  au \om_x &= (c_{\om} + u_x) \om  , \quad u_x = H\om .
\eal
\eeq

Using integration by parts and the antisymmetry property of the Hilbert transform, we have
\[
\int_{S^1} u_x \om  - a u \om_x  d x = (1 + a) \int_{S^1} \om \cdot H \om dx = 0.
\]
Therefore, if $\int_{S^1}\om_0 dx = 0$, this property is conserved 
\beq\label{eq:conserve}
 \f{d}{dt }\int_{S^1} \om d x = c_{\om} \int_{S^1} \om d x = 0.
\eeq

\subsection{ Approximate steady state and linearization}


We use the equilibrium $-\sin x$ of \eqref{eq:DG} with $a=1$ to construct the approximate steady states for \eqref{eq:DGdy}
\beq\label{eq:profile}
\bar \om = -\sin x , \quad \bar u = \sin x, \quad \bar c_{\om} = (a-1) \bar u_x(0) =  a-1.
\eeq
We impose the following normalization condition on $c_{\om}$ in \eqref{eq:DGdy} 
\beq\label{eq:normal}
c_{\om} = (a-1) u_x(0). 
\eeq

Linearizing \eqref{eq:DGdy} around the above approximate steady state, we can obtain the equation for the perturbation $(\om, c_{\om})$ ($\om + \bar \om, c_{\om} + \bar c_{\om}$ is the solution of \eqref{eq:DGdy})
\beq\label{eq:lin}
\bal
\om_t & = - a \bar u \om_x + (\bar c_{\om} + \bar u_x) \om + (c_{\om} + u_x ) \bar \om - a u \bar \om_x + N( \om ) + F(\bar \om)  \\
&\teq \cL_a \om +   N(\om ) + F(\bar \om),
\eal
\eeq
where $\cL_a$ denotes the linearized operator and the nonlinear term and error term are given below 
\beq\label{eq:NF}
\bal
N(\om) = (c_{\om} + u_x ) \om - a u \om_x , \quad F(\bar \om) = 
( \bar c_{\om} + \bar u_x ) \bar \om - a \bar u  \bar\om_x.
\eal
\eeq

Plugging the approximate steady state \eqref{eq:lin} and the normalization condition into \eqref{eq:lin} yields 
\beq\label{eq:linop}
\bal
\cL_1 \om & = -\sin x \om_x +  \cos  x \om - u_x \sin x + u \cos x  \\
\cL_a \om & = -a \sin x \om_x + (a-1 + \cos x ) \om - ( (a-1) u_x(0) + u_x)  \sin x  + a u \cos x  \\
& = \cL_1 \om + (a-1) ( - \sin x \om_x +  \om - u_x(0) \sin x + u \cos x )= \cL_1 \om  + (a-1) \cA \om ,\\ 
\eal
\eeq
where $\cA$ is given by
\beq\label{eq:linA}
\cA \om = - \sin x \om_x +  \om - u_x(0) \sin x + u \cos x.
\eeq

We consider initial perturbation $\om_0 \in X$ with $\int_{S^1} \om_0 dx =0 $, where $X$ is defined in \eqref{eq:XHnorm}. Using $\bar \om = -\sin x$ and \eqref{eq:conserve}, we yield 
\[
 0 =\int \om_0(x) + \bar \om(x) dx  = \int \om(x, t) + \bar \om(x) dx .
\]
Thus the perturbation satisfies  $\int_{S^1} \om(x, t) dx = 0$ for $t>0$.

Recall the $\cH$ norm in \eqref{eq:XHnorm} and the operator $\cL_1$ in \eqref{eq:linop}. Our stability analysis is built on the work of Lei et. al. \cite{lei2019constantin}, in which the authors proved the following results.

\begin{lem}\label{lem:linop}
Suppose that $ f, g \in \cH$ and $\int_{S^1} f dx = 0$. Denote $e_0( x) = \cos x -1$ and 
\[
 f_e = \la f, e_0 \ra_{\cH}  , \quad  \la f, g \ra_Y  \teq \la f - f_e e_0, g - g_e e_0 \ra_{\cH}.
\]
We have :
(a) Equivalence of norms : $( \cH / \R \cdot e_0, \la \cdot , \cdot \ra_Y)$ is a Hilbert space and the induced norm $|| \cdot ||_Y$ satisfies 
\[
\f{1}{2} || f ||_{\cH} \leq  || f ||_{Y} \leq || f ||_{\cH} .
\]

(b) Orthogonality : $||e_0||_{\cH} = 1$ and 
\[
\la f -  f_e e_0 , e_0 \ra_{\cH} = 0, \quad || f ||^2_{\cH} = f_e^2 + || f||_Y^2.
\]

(c) Coercivity : $\la \cL_1 f , f \ra_{Y} \leq -\f{3}{8} ||f ||^2_Y$. 
\end{lem}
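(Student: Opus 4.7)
The plan is to handle parts (a) and (b) by direct computation in the weighted inner product, and to reduce (c) to a sequence of integration-by-parts identities, with the nonlocal Hilbert-transform pieces controlled via classical formulas for $H$ on the circle.

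First I would establish (b). Since $e_0(x) = \cos x - 1$ has $e_0'(x) = -\sin x$, the double-angle identity $\sin^2 x = 4\sin^2(x/2)\cos^2(x/2)$ yields
\[
\|e_0\|_{\cH}^2 = \f{1}{4\pi}\int_{-\pi}^{\pi}\f{\sin^2 x}{\sin^2(x/2)}\,dx = \f{1}{\pi}\int_{-\pi}^{\pi}\cos^2(x/2)\,dx = 1.
\]
Then $f_e e_0$ is literally the $\cH$-orthogonal projection of $f$ onto $\mathrm{span}(e_0)$, so $\la f - f_e e_0, e_0\ra_{\cH} = 0$ is automatic and Pythagoras delivers $\|f\|^2_{\cH} = f_e^2 + \|f\|_Y^2$.

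For (a), the upper bound $\|f\|_Y \leq \|f\|_{\cH}$ and the Hilbert-space structure both fall out of (b): completeness of the quotient is inherited from completeness of $\cH$ on the closed orthogonal complement of $\R\cdot e_0$. The nontrivial content is the lower bound $\|f\|_Y^2 \geq \f{1}{4}\|f\|^2_{\cH}$, equivalent to $f_e^2 \leq \f{3}{4}\|f\|^2_{\cH}$. The naive Cauchy--Schwarz bound only gives $f_e^2 \leq \|f\|^2_{\cH}$, so the constraints $f(0)=0$ and $\int f\,dx = 0$ must enter. My plan is to expand $f$ in Fourier series, under which these constraints become $\hat{f}(0)=0$ and $\sum_n \hat{f}(n)=0$, rewrite
\[
f_e = -\f{1}{2\pi}\int_{-\pi}^{\pi} f_x\cot(x/2)\,dx
\]
in terms of the coefficients, and extremize $f_e^2/\|f\|^2_{\cH}$ over the admissible class to recover the sharp factor $3/4$.

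For (c) a striking cancellation appears after differentiating: the product rule gives
\[
(\cL_1 f)_x = -\sin x\,(f_{xx} + f + u_{xx} + u),
\]
since $-\cos x\,f_x + \cos x\,f_x = 0$ and $-u_x\cos x + u_x\cos x = 0$. Combined with $\sin x/\sin^2(x/2) = 2\cot(x/2)$, this rewrites the $\cH$-inner product as
\[
\la \cL_1 f, f\ra_{\cH} = -\f{1}{2\pi}\int_{-\pi}^{\pi}\cot(x/2)\,(f_{xx} + f + u_{xx} + u)\,f_x\,dx.
\]
Using $\partial_x\cot(x/2) = -1/(2\sin^2(x/2))$ and integrating by parts, the local piece $\int\cot(x/2)f_{xx}f_x\,dx$ collapses to exactly $-\f{1}{2}\|f\|^2_{\cH}$ and $\int\cot(x/2) f f_x\,dx$ contributes a negative weighted $L^2$ norm of $f$. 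The two nonlocal contributions are rewritten via $u_{xx}=H f_x$, antisymmetry of $H$, and the circle identities $H(\sin nx) = -\cos nx$ under our gauge, producing additional negative contributions plus cross terms proportional to $f_e$. Assembling everything and then projecting onto the $Y$-orthogonal complement of $e_0$ via (b) should yield the sharp estimate $\la \cL_1 f, f\ra_Y \leq -\f{3}{8}\|f\|^2_Y$.

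The main obstacle I anticipate is the nonlocal part of (c): the two Hilbert-transform terms do not simplify under integration by parts alone, and the sharp constant $3/8$ only emerges after all four pieces are combined and the mean-zero and normalization conditions are used to cancel cross terms. A smaller technicality is justifying integration by parts near the singularity of $\cot(x/2)$ at $x=0$, which is controlled by the $f(0)=0$ condition built into $\cH$. Since the statement is essentially borrowed from \cite{lei2019constantin}, I expect to be able to quote or mirror the argument there.
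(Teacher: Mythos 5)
Your explicit computations check out: $\|e_0\|_{\cH}=1$ follows exactly as you say from $\sin^2 x = 4\sin^2\f{x}{2}\cos^2\f{x}{2}$, the Pythagoras identity in (b) is immediate once $f_e e_0$ is recognized as the orthogonal projection, and the cancellation $(\cL_1 f)_x = -\sin x\,(f_{xx}+f+u_{xx}+u)$ is correct and is indeed the key structural fact behind (c). But be aware that the paper itself does not prove this lemma at all — it quotes it wholesale from \cite{lei2019constantin}, adding only one remark: the factor $\f12$ in (a) follows from the identity in \cite{lei2019constantin} expressing $f_e$ through Fourier coefficients, which via Cauchy--Schwarz gives $\|f\|_{\cH}^2 \leq \big(1+\sum_{k\geq 1}\f{1}{k^2(k+1)^2}\big)\|f\|_Y^2 < 4\|f\|_Y^2$. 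So your proposal is strictly more detailed than the paper's treatment, and where you defer ("quote or mirror the argument there") is exactly where the paper defers too. Two corrections to your sketch. First, $3/4$ is not the sharp constant for $f_e^2 \leq c\,\|f\|_{\cH}^2$: since $\sum_{k\geq1}\f{1}{k^2(k+1)^2} = \f{\pi^2}{3}-3 \approx 0.29$, the cited bound actually gives $f_e^2 \leq \f{S}{1+S}\|f\|_{\cH}^2$ with $\f{S}{1+S}\approx 0.22$, so your extremization will produce something considerably stronger than $3/4$ rather than "recover" it; the value $1/2$ in (a) is just a convenient round number. Second, your anticipated "cross terms proportional to $f_e$" in (c) can be made precise with the paper's own tools: since $u_{xx}=Hf_x$, the term $-\f{1}{2\pi}\int \cot\f{x}{2}\, u_{xx} f_x\,dx$ equals $\f12 (Hf_x(0))^2 = \f12 f_e^2$ by the Tricomi identity (Lemma \ref{lem:tri}) together with $Hf_x(0) = -\f{1}{2\pi}\la \cot\f{x}{2}, f_x\ra = f_e$; this positive term is precisely what the $Y$-projection removes, mirroring how the paper handles the analogous term $2\pi(H(\pa_x D_x\om)(0))^2$ in its weighted $H^2$ estimate in Section \ref{sec:H2}. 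The only genuinely unproved step in your outline is then the remaining nonlocal contribution $-\f{1}{2\pi}\int \cot\f{x}{2}\,u\,f_x\,dx$ and the emergence of the constant $-\f38$, which requires the Fourier-coefficient computation of \cite{lei2019constantin}; deferring it is acceptable given that the paper does the same.
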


\begin{remark} The constant $\f{1}{2}$ in (a) is a direct consequence of the result in \cite{lei2019constantin}, which implies $ || f||_{\cH}^2 \leq || f||_Y^2 ( 1 + \sum_{k\geq1} \f{1}{ k^2 (k+1)^2} )  < 4 || f||_Y^2$ by using the Cauchy-Schwarz inequality. 
\end{remark}

For $f, g \in \cH$ with $\int_{S^1} f dx = 0$, results (a), (b) in the above Lemma implies
\beq\label{eq:YH}
 \la f, g \ra_Y  = \la f , g - g_e e_0 \ra_{\cH} = \f{1}{4\pi} \la f_x, (g_x + g_e \sin x) \rho \ra ,
\eeq
where we have used $\pa_x e_0 =- \sin x$ and the notation \eqref{nota:innerH}.

 The following simple integration by parts will be used repeated.

\begin{lem}\label{lem:IBP}
Let $f \in L^2( \rho )$. We have 
\[
\la \sin x f_x, f \rho \ra = \f{1}{2} \la f^2, \rho \ra .
\]
\end{lem}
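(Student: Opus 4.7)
The plan is to treat this as a pure integration-by-parts identity, exploiting a small algebraic miracle built into the weight $\rho = \sin^{-2}(x/2)$. First I would rewrite the integrand symmetrically via the product rule as
\[
\sin x \cdot f_x \cdot f = \tfrac{1}{2} \sin x \cdot (f^2)_x,
\]
so that the left-hand side becomes $\tfrac12 \int_{S^1} \sin x\,\rho \cdot (f^2)_x\, dx$. Then I would integrate by parts in $x$ to move the derivative off $f^2$ and onto the combination $\sin x \cdot \rho$.

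The key computation that makes the lemma work is the identity
\[
\sin x \cdot \rho = \frac{2 \sin(x/2)\cos(x/2)}{\sin^2(x/2)} = 2\cot(x/2), \qquad \frac{d}{dx}\bigl(2\cot(x/2)\bigr) = -\csc^2(x/2) = -\rho.
\]
Plugging this into the integration-by-parts formula yields $-\tfrac12 \int (-\rho) f^2\, dx = \tfrac12 \la f^2, \rho\ra$, exactly the claim. This cancellation is essentially the design principle behind the weighted $\cH$ norm of \cite{lei2019constantin}, and it is what will ultimately give the operator $\cL_1$ its coercivity on $Y$.

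The only thing requiring any care is the boundary contribution after integrating by parts. At the identified endpoints $x = \pm\pi$ one has $\sin x\cdot \rho = 2\cot(\pm\pi/2) = 0$, so these contribute nothing. At $x=0$ the quantity $\sin x \cdot \rho = 2\cot(x/2)$ blows up like $4/x$, so the boundary trace $[\sin x\,\rho\, f^2]_{x\to 0^\pm}$ must be shown to vanish; this uses that the integrability hypothesis forces $f^2/\sin^2(x/2)$ to be integrable near $0$, hence (for functions with the mild regularity making both sides of the identity meaningful, in particular for the $f \in \cH$ that will appear in applications) one has $f(0)=0$ and $f^2(x) = o(x)$ as $x\to 0$, making the boundary term vanish. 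I do not expect any real obstacle here; the lemma is essentially a one-line computation once the identity $(\sin x \cdot \rho)_x = -\rho$ is spotted.
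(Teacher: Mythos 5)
Your proof is correct and is exactly the ``straightforward'' integration-by-parts argument the paper omits: the identity $\partial_x(\sin x \cdot \rho) = \partial_x\bigl(2\cot\tfrac{x}{2}\bigr) = -\rho$ is precisely what the paper's remark means by saying $\sin^{-2}\tfrac{x}{2}$ is an eigenfunction of the adjoint of $\sin x\,\partial_x$ with eigenvalue $1$. Your extra care with the boundary trace at $x=0$ (using integrability of $f^2\rho$, e.g.\ along a sequence with $f^2(x_n)/x_n \to 0$, or the bound $|f(x)|\lesssim |x|^{3/2}\|f\|_{\cH}$ available in all the paper's applications) is a legitimate refinement of a point the paper glosses over, not a deviation in approach.
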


The proof is straightforward and omitted. 
\begin{remark}
Formally, the above identity can be interpreted as that $ \sin^{-2} \f{x}{2}$ is an eigenfunction of the adjoint of $ \sin x \pa_x$ with eigenvalue $1$. 
An analog of it is that on the real line, $x^{-k}$ is an eigenfunction of the adjoint of $x \pa_x $ with eigenvalue $k-1$, which plays an important role in the stability analysis in \cite{chen2019finite}. It seems that the mysterious inner product $\la \cdot ,\cdot \ra_{\cH}$ and singular weight $\sin^{-2} \f{x}{2}$ constructed in \cite{lei2019constantin} arise naturally from the viewpoint of energy estimates. See more related estimates in Lemma \ref{lem:tri} and Section \ref{sec:H2}.
\end{remark}

In the following discussion, we will first establish weighted $H^1$ and weighted $H^2$ estimates of the linearized equation and then control the remaining nonlinear and error terms. 


\subsection{Weighted $H^1$ estimates}\label{sec:H1}
Recall that the perturbation satisfies $ \int_{S^1} \om dx = 0$. Performing energy estimate on $\la \om, \om\ra_Y$ yields
\beq\label{eq:H1_est1}
\f{1}{2}\f{d}{dt} \la \om, \om \ra_Y = \la \cL_a \om , \om \ra_Y + \la N(\om) , \om \ra_Y
+ \la F(\bar \om), \om \ra_Y .
\eeq
Recall the operators $\cL_1, \cL_a$ in \eqref{eq:linop} and $\cA$ in \eqref{eq:linA}.
A direct calculation yields 
\[
\bal
\pa_x \cA \om =  -\sin x \om_{xx} + \om_x (1 -\cos x ) + (u_x -u_x(0)) \cos x - u \sin x \teq - \sin x \om_{xx} + \cA_1 \om .
\eal
\]

Applying Lemma \ref{lem:linop} and the identities in \eqref{eq:linop}, \eqref{eq:YH}, we derive 
\[
\bal
\la \cL_a \om, \om \ra_Y &= \la \cL_1 \om + (a-1) \cA \om, \om \ra_Y \\
&\leq -\f{3}{8} ||\om||_Y^2 
+ \f{a-1}{4\pi} \int_{S^1} \f{ \pa_x \cA \om \cdot (\om_x + \om_e \sin x) }{ \sin^2 \f{x}{2}} dx .
\eal
\]

Recall $\rho = (\sin \f{x}{2})^{-2}$. Using the Hardy-type inequality, the Poincar\'e inequality, the Cauchy-Schwarz inequality and the isometry property of the Hilbert transform, we have 
\beq\label{eq:basic1}
\bal
&\la (u_x -u_x(0))^2, \rho \ra \les || u_{xx} ||_{L^2} \les  || \om_x||_{L^2} \les || \om ||_{\cH} , \\ 
& ||u ||_{L^2} \les || \om||_{\cH} , \qquad |\om_e| = |\la \om, e_0 \ra_{\cH} | \les || \om||_{\cH}.
\eal
\eeq

We remark that the Hardy-type inequality used above can be proved by applying an integration by parts argument 
and thus we omit the proof. Notice that 
$ |1-\cos x| \les |\sin \f{x}{2}|$ and $|\sin x | \les |\sin \f{x}{2}|$. We obtain 
\[
\B|\int_{S^1} \f{\cA_1 \om \cdot (\om_x + \om_e \sin x) }{\sin^2 \f{x}{2}} dx \B| \les  || \om||^2_{\cH}.
\]

Using Lemma \ref{lem:IBP}, we obtain 
\[
I \teq \int_{S^1} \f{ -\sin x \om_{xx} \cdot (\om_x + \om_e \sin x) }{ \sin^2 \f{x}{2}} dx
= -\f{1}{2} \int_{S^1} \f{\om_x^2}{ \sin^2 \f{x}{2}} dx 
-4 \om_e \int_{S^1} \om_{xx} \cos^2 \f{x}{2} dx .
\]
Applying integration by parts and the estimate \eqref{eq:basic1} on $\om_e$ yields
\[
|I| \les || \om ||^2_{\cH} + | \om_e |  || \om_x ||_{L^1} \les || \om||^2_{\cH}.
\]

Combining the above estimates, we establish 
\beq\label{eq:H1_est2}
\la \cL_a \om, \om \ra_Y \leq -\f{3}{8} || \om||_Y^2 + C|a-1|  || \om ||^2_{\cH}.
\eeq

\subsection{Weighted $H^2$ estimates}\label{sec:H2}

The weighted $H^2$ estimates is not necessary in order to obtain finite time blowup of \eqref{eq:DG} with $a$ less than and close to $1$ from some smooth initial data since the nonlinear estimate can be closed once the nonlinear and error terms are estimated 
using the energy $|| \om||_Y$. 
This is done in Section \ref{sec:weak}. See also \cite{chen2019finite} for the argument to establish blowup. Yet, we cannot determine if the solution of \eqref{eq:DGdy} converges as $t \to \infty$ or if the singularity is asymptotically self-similar. To further obtain convergence, we apply the time-differentiation argument in \cite{chen2019finite}. 
Since $F(\bar \om)$ is time-independent, taking $\pa_t$ in \eqref{eq:lin}, we derive 
\beq\label{eq:time}
\pa_t (\om_t) = \cL_a \om_t + \pa_t N(\om) .
\eeq
There are two approaches to estimate $\om_t$. 

(a) Estimate $\om_t$ in some norm that is weaker than the $Y$ norm. 

(b) Further perform estimate on $\om$ in some energy norm that is stronger than the $Y$ norm and then estimate $\om_t$ in the $Y$ norm. 

In \cite{chen2019finite}, the first approach is applied and $\om_t$ is estimated in some weighted $L^2$ space. Here, we do not have analysis of $\cL_a$ in some weaker norm. Alternatively, we apply the second approach which is simpler since we can use the a-priori weighted $H^1$ estimate established in Section \ref{sec:H1}. It is pointed out in \cite{Sve19} that in the case of $a=1$, where $c_{\om}, \bar c_{\om} = 0$ and \eqref{eq:DGdy} reduces to \eqref{eq:DG}, the $H^s,s > \f{3}{2}$ norm of the perturbation around the steady state $-\sin x$ typically grows exponentially. We expect similar instability for the perturbation around the approximate steady state \eqref{eq:profile} of \eqref{eq:DGdy} since $a$ is close to $1$. Thus the higher order estimate is nontrivial and we need to design the weighted $H^2$ norm carefully. 

We introduce the weighted derivative $D_x = \sin x \pa_x$. Similar weighted derivative has been used in \cite{elgindi2019finite,chen2019finite2,Elg19} for stability analysis. We have the following commutator estimate. 
\begin{lem}\label{lem:commu}
For $f, D_x f \in L^2$, 
\[
[D_x , H] f =D_x H f - H( D_x f ) = \f{1}{2\pi} \la \om, \sin x \ra .
\]
\end{lem}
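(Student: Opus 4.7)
The plan is to verify the identity by a direct Fourier computation, after first reducing the commutator $[D_x,H]$ with $D_x = \sin x \,\partial_x$ to a commutator with multiplication by $\sin x$. On $S^1$, the Hilbert transform is the Fourier multiplier $H e^{ikx} = -i\,\mathrm{sgn}(k) e^{ikx}$ (with the convention $\mathrm{sgn}(0)=0$), and $\partial_x$ is also a Fourier multiplier, so $H$ and $\partial_x$ commute on, say, mean-zero functions. Writing
\[
[D_x,H] f = \sin x\cdot H f_x - H(\sin x\cdot f_x) = [\sin x\,\cdot,\,H]\,(f_x),
\]
it suffices to compute $[\sin x\,\cdot,\,H]\,g$ for $g = f_x$.

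Next, for $g = \sum_{k\in\mathbb Z} b_k e^{ikx}$, use $\sin x = \tfrac{1}{2i}(e^{ix}-e^{-ix})$ to expand both $\sin x \cdot Hg$ and $H(\sin x\cdot g)$ in Fourier modes. Subtracting, every coefficient involves a factor of the form $\mathrm{sgn}(m\pm 1) - \mathrm{sgn}(m)$, which vanishes except at the three jumps $m\in\{-1,0,1\}$ of $\mathrm{sgn}$. Collecting the surviving terms yields
\[
[\sin x\,\cdot,\,H]\,g \;=\; b_0 \cos x \;+\; \tfrac{1}{2\pi}\langle g,\cos x\rangle.
\]
Specializing to $g=f_x$, the mean $b_0 = \tfrac{1}{2\pi}\int_{S^1} f_x\,dx$ vanishes by periodicity, and integration by parts gives $\langle f_x,\cos x\rangle = \langle f,\sin x\rangle$. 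This delivers the claimed identity $[D_x,H]f = \tfrac{1}{2\pi}\langle f,\sin x\rangle$ (reading the $\omega$ in the statement as the generic $f$).

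The only substantive point is careful bookkeeping of the boundary values of $\mathrm{sgn}$ at $k=0$ when multiplication by $\sin x$ shifts Fourier modes by $\pm 1$; everything else is routine. Regarding regularity, the hypotheses $f, D_x f \in L^2$ are not enough to make the Fourier series absolutely convergent pointwise, but one can first verify the identity for smooth $f$, observe that both sides are continuous in $f$ with respect to the norm $\|f\|_{L^2} + \|D_x f\|_{L^2}$ (the right-hand side is a continuous linear functional of $f$, and the left-hand side involves $\sin x\cdot (Hf)_x$ which is bounded in terms of $\|D_x f\|_{L^2}$ plus lower order terms via the commutator itself, together with $H(D_x f) \in L^2$), and extend by density. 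I expect the Fourier calculation to be the only real step; the main pitfall is keeping track of signs in the three non-vanishing modes.
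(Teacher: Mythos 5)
Your proof is correct, but it takes a genuinely different route from the paper. The paper works entirely in physical space: it writes $D_x H f$ via the kernel, integrates by parts in $y$ to move the derivative off $\cot\frac{x-y}{2}$, and then uses the product-to-sum identity $(\sin x-\sin y)\cot\frac{x-y}{2}=\cos x+\cos y$, so the commutator's kernel is seen directly to be the smooth rank-two kernel $\frac{1}{2\pi}(\cos x+\cos y)$, from which the constant $\frac{1}{2\pi}\langle f,\sin x\rangle$ falls out after one more integration by parts. You instead reduce to $[\sin x\,\cdot\,,H](f_x)$ using $\partial_x H=H\partial_x$ and compute on the Fourier side, where the commutator survives only at the jumps of $\mathrm{sgn}$; your intermediate formula $[\sin x\,\cdot\,,H]g=b_0\cos x+\frac{1}{2\pi}\langle g,\cos x\rangle$ checks out (the modes $k\in\{-1,0,1\}$ contribute $b_0\cos x+\frac{b_1+b_{-1}}{2}$), and $b_0=0$ for $g=f_x$ recovers the stated identity. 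The trade-off: the paper's computation is shorter and self-contained for this one identity, while your multiplier argument makes the finite-rank structure transparent and generalizes immediately to commutators of $H$ with any trigonometric polynomial. One point worth tightening: your continuity-in-$f$ justification for the density step reads slightly circular as phrased (bounding $\sin x\,(Hf)_x$ "via the commutator itself"); the clean version is to mollify so that $f_n\to f$ and $D_xf_n\to D_xf$ in $L^2$ (Friedrichs' commutator lemma handles $D_x(f*\varphi_\epsilon)-\varphi_\epsilon*(D_xf)\to 0$ since $\sin x$ is smooth), conclude $D_xHf_n=H(D_xf_n)+c_n$ converges in $L^2$ while $D_xHf_n\to D_xHf$ distributionally, and identify the limits. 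The paper's proof, for what it is worth, does not address the low-regularity hypotheses at all, so your sketch is if anything more attentive to this issue.
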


We defer the proof to the appendix. 

Taking $D_x$ on both sides of \eqref{eq:lin} yields 
\beq\label{eq:H2}
 \pa_t D_x \om = D_x \cL_a \om + D_x N(\om) + D_x F(\bar \om).
\eeq

Our goal is to perform weighted $H^1$ estimate of $D_x \om$ with weight  $\rho = (\sin \f{x}{2})^{-2}$. To simplify the derivation, we use $l.o.t.$ (lower order terms) to denote the terms whose weighted $L^2(\rho)$ norm can be bounded by $C || \om ||_{\cH}$ for some absolute constant $C$.  
It can vary from line to line. Using the Hardy-type inequality and the $L^2$ isometry of the Hilbert transform, we obtain 
\beq\label{eq:L2_xt1}
\bal
&|| u \rho^{1/2}||_{L^2} \les || u_x||_{L^2} \les || \om ||_{\cH}, \quad || u_{xx}||_{L^2} \les || \om ||_{\cH}, \\
& || \om \rho^{1/2} ||_{L^2} \les || \om_x||_{L^2} \les || \om ||_{\cH}.
\eal
\eeq
Combining the above estimates and \eqref{eq:basic1}, we yield that
\[
u, \quad  u_x - u_x(0), \quad  \sin x \cdot u_x,   \quad \sin x \cdot u_{xx}  = D_x u_x, \quad  \om, \quad \om_x 
, \quad \om_e \sin x 
\]
and the product of these terms and smooth functions are l.o.t.

 Recall $\cL_a$ in \eqref{eq:linop}. We rewrite $\cL_a$ as follows 
\[
\bal
\cL_a &= \B( -a D_x \om + a \om \cos x  \B)+ (a-1) (1 - \cos x ) \om   + \B( - u_x \sin x + u \cos x \B) \\
&+ (a- 1) ( u \cos x - u_x(0) \sin x  ) \teq P_1 + P_2 + P_3 + P_4.
\eal
\]

Note that $D_x$ satisfies the Leibniz rule 
\[D_x(fg) = gD_x f + f D_x g .\]

Using $\pa_x(\om D_x \cos x) = l.o.t.$ and a direct calculation yields 
\[
\bal
\pa_x D_x P_1 &= a  \pa_x ( - \sin x \pa_x D_x \om + \cos x D_x \om + \om D_x \cos x ) \\
&=  a(  - \sin x \pa_x \pa_x D_x \om  - \cos x \pa_x D_x \om + \cos x \pa_x D_x \om
- \sin x D_x \om) + l.o.t. \\
& = - a \sin x \pa_x \pa_x D_x \om + l.o.t. .
\eal
\]
For $P_3$, using $\pa_x( u \sin^2 x) =l.o.t.$ and Lemma \ref{lem:commu}, we get
\[
\bal
\pa_x D_x P_3 & = \pa_x( \sin x ( \pa_x P_3))  =\pa_x (  \sin x ( -u_{xx} \sin x - u_x \cos x + u_x \cos x - u \sin x  ) ) \\
& =  -\pa_x( \sin x D_x u_x) + l.o.t.    =  -\cos x  D_x u_x  - \sin x \pa_x ( D_x u_x ) + l.o.t.  \\
&=- \sin x \pa_x ( D_x u_x ) + l.o.t.  =  - \sin x  \pa_x \B(  H( D_x \om) +  \f{1}{2\pi} \la \om, \sin x \ra \B) + l.o.t. \\
&=  - \sin x H( \pa_x D_x \om) + l.o.t. . 
\eal
\]
For $P_2$ and $P_4$, we have 
\[
\bal
\pa_x D_x P_2 & = (a-1) \pa_x ( (1-\cos x) D_x \om + \om D_x(1-\cos x) ) = (a-1) (1-\cos x) \pa_x D_x \om + l.o.t.. \\
\pa_x D_x P_4 & = (a-1) \pa_x ( \cos x D_x u + u D_x \cos x - u_x(0) D_x \sin x ) \\
& = (a-1) \pa_x ( \cos x \sin x  u_x -  u  \sin^2 x - u_x(0) \sin x \cos x ) \\
& = (a-1) \pa_x ( \cos x \sin x  (u_x -u_x(0)) )  + l.o.t. \\
& = (a-1) \pa_x( \cos x \sin x) \cdot (u_x - u_x(0)) + (a-1) \sin x \cos x \cdot u_{xx} + l.o.t. = l.o.t..
\eal
\]

Performing the weighted $H^1$ estimate on $D_x \om$ \eqref{eq:H2} with weight $\rho$ yields  
\beq\label{eq:H2_est1}
\bal
\f{1}{2} \f{d}{dt} \la \pa_x D_x \om, \pa_x D_x \om \rho \ra = \la \pa_x D_x \cL_a \om, \pa_x D_x \om \rho \ra + \la \pa_x D_x N(\om) + \pa_x D_x F( \bar \om ), \pa_x D_x \om  \rho \ra.
\eal
\eeq

Using the above estimates on $P_i$, we obtain 
\[
\bal
\la \pa_x D_x \cL_a \om, \pa_x D_x \om \rho \ra 
&\leq \B\la - a \sin x \pa_x \pa_x D_x \om  - \sin x H( \pa_x D_x \om) 
+ (a-1) (1-\cos x) \pa_x D_x \om  , \pa_x D_x \om \rho \B\ra  \\
&  + C || \pa_x D_x \om \rho^{1/2} ||_{L^2} || \om ||_{\cH}  \teq  \la I_1 + I_2 + I_3 , \pa_x D_x \om \ra + I_4,
\eal
\]
where we have bounded the l.o.t. in $P_i$ by $|| \om ||_{\cH}$. 

For $I_1$, applying Lemma \ref{lem:IBP} with $f = \pa_x D_x \om$ yields 
\[
\la I_1 ,  \pa_x D_x \om \rho \ra = -\f{a}{2} || \pa_x D_x \om \rho^{1/2}||^2_{L^2}
\leq ( -\f{1}{2} +\f{1}{2} |a-1|)  || \pa_x D_x \om \rho^{1/2}||^2_{L^2} .
\]

Note that $ \sin x \cdot \rho = \sin x (\sin \f{x}{2})^{-2} = 2 \cot \f{x}{2}$. For $I_2$, applying Lemma \ref{lem:tri} with $f = \pa_x D_x \om$, we obtain 
\[
\la I_2, \pa_x D_x \om \rho \ra =  -2 \la \cot \f{x}{2} H ( \pa_x D_x \om) , \pa_x D_x \om \ra
= 2 \pi  (H( \pa_x D_x \om)(0))^2.
\]
Since $ H f(0) = - \f{1}{2\pi} \la  \cot \f{x}{2}, f\ra$ and $\pa_x \cot \f{x}{2} = - \f{1}{2} (\sin \f{x}{2})^{-2}$, using integration by parts, we derive 
\[
\bal
|H( \pa_x D_x \om)(0) | &= \f{1}{2\pi} |\la \cot \f{x}{2} ,\pa_x D_x \om \ra |
= C \B| \B\la  (\sin \f{x}{2})^{-2}, D_x \om \B\ra \B|  \\
& = C \B| \B\la  (\sin \f{x}{2})^{-2}, \sin x \pa_x \om \B\ra \B|  
\les || \om_x (\sin \f{x}{2})^{-1} ||_{L^2} \les || \om ||_{\cH}.
\eal
\]

Therefore, we establish 
\[
|\la I_2 ,  \pa_x D_x \om \rho \ra | \les || \om ||^2_{\cH}.
\]

The estimate of $I_3$ is simple 
\[
|\la I_3, \pa_x D_x \om \rho \ra| \les |a-1| \cdot || \pa_x D_x \om \rho^{1/2} ||^2_{L^2}.
\]

Therefore, we obtain the following estimates for $\cL_a$
\beq\label{eq:H2_est2}
\bal
\la \pa_x D_x \cL_a \om, \pa_x D_x \om \rho \ra 
&\leq (-\f{1}{2} + C|a-1|) || \pa_x D_x \om \rho^{1/2} ||^2_{L^2}
+ C || \pa_x D_x \om \rho^{1/2} ||_{L^2} || \om ||_{\cH} + C||\om||_{\cH}^2 \\
& \leq (-\f{3}{8} + C|a-1|) || \pa_x D_x \om \rho^{1/2} ||^2_{L^2}
+ C  || \om ||^2_{\cH} .\\
\eal
\eeq

\subsection{Estimates of the nonlinear and the error terms}\label{sec:NF}
Recall the nonlinear term and the error term $N(\om), F(\bar \om)$ \eqref{eq:NF}. Using \eqref{eq:profile},\eqref{eq:normal}, we have 
\beq\label{eq:NF2}
\bal
N(\om)  &= (a-1)u_x(0) \om + u_x \om  - a u \om_x  , \\ 
 F(\bar \om) &= (a-1 + \cos x) (-\sin x) - a \sin x( -\cos x) \\
 &= (1-a) (\sin x - \sin x \cos x) = 2 (1-a) \sin x  \sin^2 \f{x}{2}.
\eal
\eeq

Since $F$ is of order $O(x^3)$ near $0$, it is easy to obtain 
\[
|| F(\bar \om) ||_{\cH} \les |1-a|, \quad  || \pa_x D_x F( \bar \om) \rho^{1/2} ||_{L^2} \les |1-a|,
\]
which implies
\beq\label{eq:F_est}
| \la F(\bar \om),  \om \ra_Y| \les |1-a| ||\om||_Y , \quad | \la \pa_x D_x F(\bar \om), \pa_x D_x \om \rho \ra| \les |1-a| ||\om||_X.
\eeq

To control the $L^{\infty}$ norm, we have a simple Lemma, whose proof is deferred to the appendix.
\begin{lem}\label{lem:linf}
Suppose that $ f_x \in L^2(\rho)$ and $f(0) = 0$. We have $ || f \rho^{1/2} ||_{L^{\infty}} \les 
||f_x \rho^{1/2} ||_{L^2}$.
\end{lem}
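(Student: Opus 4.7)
The strategy is straightforward: use the vanishing of $f$ at the origin to write $f(x)$ as a definite integral of $f_y$ starting at $0$, then apply a weighted Cauchy--Schwarz inequality splitting the integrand against the weight $\rho$. The point is that the weight $\rho(y)=\sin^{-2}(y/2)$ only blows up at $y=0$ on the circle $[-\pi,\pi]$, so the only possible issue is the behaviour near $0$, and there the factor $\rho(x)$ outside the $L^\infty$ bound will be cancelled by the smallness of $\int_0^x \rho^{-1}$ for small $x$.

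Concretely, for $x\in(0,\pi]$ I would write
\[
|f(x)|^2 = \Bigl|\int_0^x f_y(y)\,dy\Bigr|^2 \leq \Bigl(\int_0^x \rho(y)^{-1}\,dy\Bigr)\,\Bigl(\int_0^x |f_y(y)|^2 \rho(y)\,dy\Bigr) \leq G(x)\,\|f_y\rho^{1/2}\|_{L^2}^2,
\]
where $G(x)=\int_0^x \sin^2(y/2)\,dy = (x-\sin x)/2$ by the half-angle identity. Multiplying by $\rho(x)=\sin^{-2}(x/2)$ gives
\[
|f(x)|^2\rho(x) \leq \frac{x-\sin x}{2\sin^2(x/2)}\,\|f_y\rho^{1/2}\|_{L^2}^2.
\]
The case $x\in[-\pi,0)$ is identical after replacing $\int_0^x$ by $-\int_x^0$.

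The remaining step is to check that the scalar function $h(x)\teq (x-\sin x)/\sin^2(x/2)$ is bounded on $[-\pi,\pi]$, which will furnish the absolute constant implicit in $\lesssim$. This is a one-line calculus check: as $x\to 0$ one has $x-\sin x\sim x^3/6$ and $\sin^2(x/2)\sim x^2/4$, so $h(x)\sim 2x/3\to 0$; on the compact set $[-\pi,\pi]\setminus\{0\}$ the function is continuous with $h(\pm\pi)=\pm\pi$, so in particular $\sup_{[-\pi,\pi]}|h|\leq \pi$. Taking square roots gives the claimed $L^\infty$ bound.

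I do not anticipate any real obstacle: the lemma is a weighted Hardy--Poincar\'e statement in disguise, and the boundedness of $h$ on $[-\pi,\pi]$ is an elementary exercise. The only minor subtlety is making sure to anchor the integration at $0$ (where both $f$ vanishes \emph{and} $\rho^{-1}$ vanishes to second order), which is precisely the structural assumption $f(0)=0$ that makes the estimate work; note that the analogous estimate anchored at $\pm\pi$ would fail, consistent with the fact that the weight $\rho$ only accounts for a singularity at $0$.
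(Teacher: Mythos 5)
Your proof is correct and is essentially the same argument as the paper's: write $f(x)=\int_0^x f_y\,dy$ using $f(0)=0$, apply Cauchy--Schwarz against the weight $\rho$, and observe that $\rho(x)\int_0^x \rho(y)^{-1}dy$ is bounded on $[-\pi,\pi]$. The only difference is that you carry out the elementary boundedness check for $h(x)=(x-\sin x)/\sin^2(x/2)$ explicitly, which the paper leaves implicit.
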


Using the Sobolev embedding and the above Lemma, we yield
\beq\label{eq:Linf1}
\bal
&|| u_x||_{L^{\infty}} \les || \om ||_{\cH}, \quad || \om \rho^{1/2} ||_{L^{\infty}} \les 
|| \om_x \rho^{1/2}||_{L^2} \les || \om ||_{\cH},  \\ 
& || D_x \om \rho^{1/2} ||_{L^{\infty}} \les || \pa_x D_x \om \rho^{1/2} ||_{L^2}. 
\eal
\eeq
In particular, $|u_x(0)| \les || \om ||_{\cH}$. With the above estimates and \eqref{eq:basic1}, the estimate of $\la N(\om), \om \ra_Y$ is standard. In the case of $a=1$, it has been established in \cite{lei2019constantin}. Its generalization to the case of $a$ close to $1$ is straightforward. In particular, we obtain
\beq\label{eq:H1_N}
|\la N(\om), \om \ra_Y| \les || \om ||^3_{\cH}.
\eeq

\paragraph{\bf{Weighted $H^2$ estimate of $N(\om)$ with odd assumption}}

Next, we estimate the nonlinear term $N(\om)$ in \eqref{eq:H2_est1}. We need to impose an extra condition on the initial perturbation $\om_0$. It is easy to see that the odd condition is preserved and both $u, \om$ are odd. Since the solution is $2 \pi $ periodic, we obtain $\om(\pi) = 0$ and $u(\pi) = 0$.

 Recall the $X$ norm in Definition \ref{def}. Using the estimate
\[
| \rho^{1/2} \pa_x D_x f - 2 \cos \f{x}{2} f_{xx} |
= | (\sin \f{x}{2} )^{-1} ( \cos x f_x + \sin x f_{xx} ) - 2 \cos \f{x}{2} f_{xx} | \les | f_x| \rho^{1/2} ,
\]
we derive the equivalence of norms 
\beq\label{eq:equiX}
 || f||_X^2 \les || f||^2_{\cH} + || \pa_x D_x f \rho^{1/2}||^2_{L^2}
 \les || f||_X^2
\eeq
for any $f \in X$. 


Recall the $L^{\infty}$ estimates \eqref{eq:Linf1}. We can control the $L^{\infty}$ norms of $u_x, \om \rho^{1/2}, D_x \om \rho^{1/2}$ by $|| \om ||_X$. Using \eqref{eq:L2_xt1}, the above equivalence and $\rho^{1/2} \sin x = \cos \f{x}{2}$, we have
\[
 || u_x||_{L^2} +|| u_{xx}||_{L^2} +  ||\rho^{1/2} \pa_x D_x \om||_{L^2} +  ||\rho^{1/2} \sin x \om_{xx} ||_{L^2} \les || \om||_X.
\]

The crucial odd condition on the solution is used to obtain $u(0) = u(\pi)= 0$ and thus 
$|u | \les ||u_x||_{L^{\infty}} |\sin x|$. Hence, we obtain 
\beq\label{eq:xt2}
\bal
|| u \om_{xx} \rho^{1/2} ||_{L^2}  &= || \f{u}{\sin x} ||_{L^{\infty}} || \sin x \rho^{1/2} \sin x \om_{xx} ||_{L^2} \\
&\les || u_x ||_{L^{\infty}} || \rho^{1/2} \sin x \om_{xx}||_{L^2} \les || \om||^2_{X}.
\eal
\eeq

With the above preparations, the estimates of the $\la \pa_x D_x N(\om), \pa_x D_x \om \rho \ra$ are standard and similar to those in \cite{lei2019constantin}. We only focus on the difficult terms that require special estimates. The first term is $\pa_x D_x u_x \cdot \om$ that appears in the expansion of $ \pa_x D_x (u_x \om )$. Using Lemma \ref{lem:commu}, we get 
\[
\bal
  \pa_x D_x u_x \cdot \om  = \pa_x (  H(D_x \om) + \f{1}{2\pi} \la \om, \sin x\ra )  \cdot \om 
  = \pa_x  H(D_x \om)  \cdot \om =  \om  H(  \pa_x D_x\om ).
  \eal
\]

Using the $L^2$ isometry property of the Hilbert transform and \eqref{eq:Linf1}, we obtain 
\[
\bal
|| \om H( \pa_x D_x \om) \rho^{1/2}||_{L^2} &\les || H( \pa_x D_x \om) ||_{L^2} || \om \rho^{1/2}||_{L^{\infty}} \les || \pa_x D_x \om ||_{L^2} || \om||_X \les || \om||_X^2,
\eal
\]
which implies the $L^2(\rho)$ estimate of $  \pa_x D_x u_x \cdot \om $. The second term is $u \cos \om_{xx}$ that appears from $ -a \pa_x D_x ( u \om_x)$. Using $D_x \om_x = \pa_x( D_x \om) - \cos x \om_x$ and a direct calculation yields 
\[
\bal
\pa_x D_x (u \om_x) &= \pa_x ( \om_x D_x u + u D_x \om_x)
= \pa_x(  \om_x D_x u + u \pa_x D_x \om - u \cos x \om_x ) \\
& = \om_{xx} D_x u + \om_x \pa_x ( D_x u) 
+ u_x \pa_x D_x \om + u \pa_x ( \pa_x D_x \om) - \pa_x( u \cos x) \om_x - u \cos x \om_{xx} . \\
\eal
\]
The estimates of the first five terms are standard. For the last term, we apply \eqref{eq:xt2}. 

In summary, we obtain 
\beq\label{eq:H2_N}
\la \pa_x D_x N(\om), \pa_x D_x \om \rho \ra \leq  C || \om||_X^3.
\eeq

\subsection{Nonlinear stability}\label{sec:nonsta}
Recall the equivalence of norms in Lemma \ref{lem:linop}.
Combining the estimates \eqref{eq:H1_est2}, \eqref{eq:H2_est2}, \eqref{eq:H1_N}, \eqref{eq:H2_N} and \eqref{eq:F_est}, we can construct an energy 
\[
E(t)^2  = || \pa_x D_x \om \rho^{1/2}||^2_{L^2} + \mu \la \om, \om \ra_Y
\]
for some absolute constant $\mu > 1$, such that the following estimate holds for \textit{odd} perturbation 
\[
\f{1}{2} \f{d}{dt} E(t) \leq ( -\f{1}{3}  + C|a-1| ) E(t)^2 
+ C ( || \om ||_X^3 + || \om||^3_{\cH}) + C |a-1| ( || \om ||_X + || \om ||_{\cH}).
\]

Since $\mu$ is absolute, using \eqref{eq:equiX} and the equivalence between the $\cH$ norm and the $Y$ norm from Lemma \ref{lem:linop}, 
we yield the equivalence between the energy $E$ and the $X$ norm, i.e.
\beq\label{eq:equivEX}
 || \om||_X^2 \les E(t) \les || \om||_X^2.
\eeq
Therefore, we can further obtain 
\[
\f{1}{2} \f{d}{dt} E(t) \leq 
( -\f{1}{3}  + C|a-1| ) E(t)^2 + C E(t)^{3/2} + C|a-1| E(t)^{1/2}.
\]

Hence, there exists some small constant $\d_0$ and some absolute constant $c>0$ such that for $ |a-1| < \d_0$ , if $E(0)^{1/2} < c|a-1|$, then $E(t)^{1/2} < c|a-1|$ for all $t>0$, which can be proved by a bootstrap argument. Using this bootstrap result and \eqref{eq:equivEX}, we obtain 
\beq\label{eq:boots}
|| \om ||_X \leq C E(t)^{1/2} \leq C |a-1|.
\eeq
We can further choose smaller $\d_0$ such that 
\beq\label{eq:cw_est}
\bal
 |c_{\om}| & = | (1-a) u_x(0)| \leq C |a-1| E(t)^{1/2}  \\
 &\leq C|a-1|^2  \leq C \d_0 |a-1| < \f{1}{10} |a-1|.
 \eal
\eeq

\subsection{Convergence to the self-similar profile}

We focus on odd initial perturbation $\om_0 \in X$ with $E(0)^{1/2} \leq c|a-1|$. 
We obtain $\int \om_0 dx= 0$ and estimate \eqref{eq:boots} for $||\om||_X$.
Moreover, since $\om$ is odd, we get $\om_e = \la \om , \cos x - 1 \ra_{\cH} = 0$ and similarly $\om_{e, t} = 0$. 
From Lemma \ref{lem:linop}, we obtain that the inner products $\la \cdot,\cdot \ra_Y,  \la \cdot, \cdot \ra_{\cH}$ are the same, and the norms $||\cdot ||_Y$ and $|| \cdot ||_{\cH}$ are the same.

Performing estimate on $\la \om_t, \om_t \ra_Y$, we yield
\[
\f{1}{2} \f{d}{dt} \la \om_t , \om_t \ra_Y
= \la \cL_a \om_t , \om_t \ra_Y + \la \pa_t N(\om), \om_t \ra_Y \teq I + II.
\]
The estimate of the $\cL_a$ part follows from \eqref{eq:H1_est2}
\[
\la \cL_a \om_t , \om_t \ra_Y \leq (-\f{3}{8} + C|a-1|) || \om||_Y^2.
\]
A direct calculation yields 
\[
 \pa_x \pa_t( u\om_x) = \pa_x ( u_t \om_x + u \om_{t, x} )
 = u_{t,x} \om_x + u_t \om_{xx} + u_x \om_{t, x} + u \om_{t, xx}.
\]
We focus on the term $u_t \om_{xx}$.
The inner product involving $u_t \om_{xx}$ is $-a \la u_t \om_{xx} , \pa_x \om_t \rho \ra.$
Using an estimate similar to \eqref{eq:xt2} and the $L^{\infty}$ estimate \eqref{eq:Linf1}, we get 
\[
|| u_t \om_{xx}\rho^{1/2} ||_{L^2} \les || \f{u_t}{\sin x}||_{L^{\infty}}|| \om||_X\les 
|| u_{t, x}||_{L^{\infty}} || \om||_X
\les || \om_t ||_{\cH} || \om||_X,
\]
where we have used $u_t(0) = u_t(\pi) =0$ due to the crucial odd condition to obtain the second inequality. Other terms in $II$ can be bounded by $|| \om_t||_{\cH} || \om||_{X}$ using estimates similar to those in Section \ref{sec:NF}. We can obtain 
\[
\f{1}{2} \f{d}{dt} || \om_t||_Y^2  \leq (- \f{3}{8} + C|1-a|  + C || \om||_X) || \om_t||_Y^2 \\
\leq (- \f{3}{8} + C|1-a|  ) || \om_t||_Y^2 .
\]
The last inequality holds due to \eqref{eq:boots}.

Now we choose $\d_1 = \d_0$ and $|a-1| < \d_1$, where $\d_0$ is the parameter determined in \eqref{eq:cw_est}.
Applying the argument in \cite{chen2019finite}, we can obtain that $\om(t) + \bar \om$ converges to some $\om_a$ strongly in the $\cH$ norm and $c_{\om}+ \bar c_{\om} \to c_{\om, a}$ for some scalar $c_{\om, a}$ exponentially fast as $t\to \infty$. In addition, there is a subsequence of $\om(t) + \bar \om$ that converges weakly to $\om_a$ in $X$ and thus $\om_a \in X$. Moreover, $(\om_a,  c_{\om, a})$ is the steady state of \eqref{eq:DGdy}.  The relation \eqref{eq:relation} implies that $\om_a$ is a self-similar profile of \eqref{eq:DG}. Since $\om(t) + \bar \om$ is odd, the convergence $\om(t)  + \bar \om\to \om_a$ implies that $\om_a$ is odd. 
Using the convergence results, $ ||\om(t)||_X \leq C|a-1|$ in \eqref{eq:boots} and \eqref{eq:cw_est}, we establish the estimates \eqref{eq:err_prof} in Theorem \ref{thm:profile}. 

\subsection{Regularity of the profile}
In this Section, we estimate the regularity of $\om_a$.

\subsubsection{Estimate of exponent $\g(a)$}  Recall $\bar \om = -\sin x$ and $H\bar \om(\pi) = \bar u_x(\pi) = -1$ in \eqref{eq:profile}.  Using \eqref{eq:err_prof}, we yield
\[
\bal
&|c_{\om, a} - (a-1)| \les |a-1|^2, \quad | c_{\om, a} | \les |a-1|,  \\
&| H\om_a(\pi) - H \bar \om(\pi)| \les ||\om - \bar \om||_X \les |a-1| , 
\quad |H\om_a(\pi)| \les 1.
\eal
\]

It follows $| H\om_a(\pi)| \geq |H \bar \om(\pi)| - C|a-1| = 1 - C|a-1|$. By choosing smaller $\d_1$ so that $|a-1| < \d_1$ is small, we obtain $ | H\om_a(\pi)|  \geq \f{1}{2}$. Applying the above estimates, we obtain 
\[
 \B| \f{  c_{\om,a} + (1-a) H \om_a( \pi) }{ a H\om_a(\pi)} -  \f{ (a-1) + (1-a) H \bar \om (\pi)}{ a H \bar\om(\pi) }\B| \les |a-1|^2.
 \]

The first term on the left hand side gives $\al(a)$ in \eqref{eq:hol_exp} and the second term equals $ \f{(a-1) +(a-1)}{-a}$ which satisfies
\[ | \f{(a-1) +(a-1)}{-a} - 2(1-a)| \les |1-a|^2. \]

By further choosing smaller $\d_1$ so that $|a-1| < \d_1$ is small, we can obtain $ | H\om_a(\pi) + 1| = |H\om_a(\pi) - H \bar \om(\pi)| < \f{1}{10}$ and $|\al(a)- 2(1-a)| \leq \f{1}{2}|a-1|$. We complete the estimates of $\al(a)$ and $H\om_a(\pi)$ in Theorem \ref{thm:profile}.


\subsubsection{Estimates of the H\"older norm of $\om_a$} 
Applying $|| \om_a + \sin x||_X \les |a-1|$ in \eqref{eq:err_prof}, we obtain 
\[
|| u_{a,xx} + \sin x||_{L^2} \les |a-1|, \quad || \om_{a, xx} - \sin x ||_{L^2 ( [-\pi /2, \pi /2] ) } \les |a-1| .
\]

Using Sobolev embedding , we yield that $\om$ is close to $-\sin x$ on $[-\pi /4, \pi /4]$ in $C^{1,1/3}$ norm and $u$ is close to $ \sin x$ in $C^{1,1/3}$ norm. Hence, up to further choosing smaller $\d_1$ so that $|a-1| <\d_1$ is small, we can assume that $ u_a$ has exactly two zeros at $x=0, x =\pi$ and $\om_a$ has only one zero at $x=0$ in $[-\pi /4, \pi /4]$. We simplify $\om_a,  c_{\om, a}$ as $\om, c_{\om}$ in the following derivation. Since $(\om, c_{\om})$ is the steady state of \eqref{eq:DGdy}, we derive 
\beq\label{eq:ODE0}
a u\om_x = (c_{\om} + u_x) \om .
\eeq

We fix $ x_0 = \f{\pi}{8}$. Then $\om( x_0) \neq 0$. The above equation can be seen as an ODE on $\om$ with given $ c_{\om}, u, u_x$. Starting from $x_0$, we can solve the ODE 
\beq\label{eq:ODE}
\om(x) = \om(x_0) \exp( \int_{x_0}^x \f{ c_{\om} + u_x }{a u} dx ).
\eeq

Since $ u \in C^{1, 1/3}$ and $u(\pi) =0$, we yield
\[
 |u(x)  -u_x(\pi)(x-\pi)| \les_a |x-\pi|^{4/3}, \quad |u_x(x) - u_x(\pi)| \les_a |x-\pi|^{1/3} .\]
It follows 
\beq\label{eq:asym1}
\B|\f{ c_{\om} + u_x(x)}{a u(x)} - \f{c_{\om} + u_x(\pi)}{ a u_x(\pi) (x-\pi)}\B|
\les_a |x-\pi|^{-2/3}.
\eeq

Recall the formula of $\al(a)$ in \eqref{eq:hol_exp}. We get 
\[
\B|\int^x_{x_0} \f{ c_{\om} + u_x(x)}{a u(x)}  - ( \al(a) + 1) \f{1}{x-\pi} dx \B| 
\les_a  \int_{x_0}^x |x-\pi|^{-2/3} dx  \les_a 1
\]
for all $x \in ( \pi/2, \pi)$. Note that 
$ \int_{x_0}^x  \f{1}{ x-\pi} dx = \log  |x-\pi| - \log( |x_0 - \pi| $. 
Plugging these estimates into \eqref{eq:ODE}, for $x \in (\pi/2, \pi) $, we obtain
\beq\label{eq:asym2}
 |x-\pi|^{\al(a) + 1} \les_a |\om(x) | \les_a | x-\pi|^{ \al(a)+1}.
\eeq

From the estimates of $\al(a)$ following \eqref{eq:hol_exp}, we have $\al(a) < 0$ for $a >1$ and $\al(a) > 0$ for $a < 1$. Therefore, for $ 1 < a< 1+ \d_1$ and any $1 + \al(a) < a < 1$, we obtain 
\[
\liminf_{x \to \pi^-} \f{ |\om(x) |}{ |x - \pi|^{\al} }  \gtrsim_a  \liminf_{x\to \pi^-}  |x-\pi|^{\al(a) + 1 -\al} = \infty.
\]
Since $\om(\pi) = 0$, we yield $\om_a \notin C^{\al}$ for $1-\d_1<a< 1$ and $\al(a) + 1<\al <1$.

Recall $u \in C^{1,1/3}, \om \in H^1(S^1) \hookrightarrow C^{1/3}(S^1)$ and $u(x) \neq 0$ for $ x \neq 0, \pi$. Using \eqref{eq:ODE0}, we can derive that $\om_x$ is continuous on $(0,\pi)$ and thus $\om \in C^1( (0,\pi))$.  Similarly, we can obtain $\om \in C^1( (-\pi, 0))$. From the discussion at the beginning of this section, we have $\om \in C^{1,1/3}( -\pi/4, \pi/4)$. It follows $ \om \in C^1( S^1 \bsh \{\pi\})$. It remains to show that $\om_x$ is continuous at $x = \pi$. Using \eqref{eq:ODE0}, \eqref{eq:asym1}, \eqref{eq:asym2} and $\al(a) > 0$ for $1- \d_1 < a <1$, we obtain 
\[
 \limsup_{x \to \pi^-} |\om_x(x)| =  \limsup_{x\to \pi^-} | \f{c_{\om} + u_x}{ au}\om|
\les_a   \limsup_{x\to \pi^-} \f{1}{|x-\pi|} \cdot |x-\pi|^{1 + \al(a)} =0,
\]
Similarly, we yield $  \limsup_{x \to \pi^+} |\om_x(x)|=0 $. Thus $\om \in C^1(S^1)$. In particular, $\om_x(\pi) = 0$.

Finally, for any $\al$ with  $\al(a) < \al< 1$, using a similar argument and \eqref{eq:asym2} yields 
\[
 \liminf_{x \to \pi^-} \f{ |\om_x(x)|}{ |x-\pi|^{\al} }  =  \liminf_{x\to \pi^-} \B| \f{c_{\om} + u_x}{ au}\B| \f{ |\om| }{ |x-\pi|^{\al}}
\gtrsim_a  \liminf_{x\to \pi^-}  \f{1}{|x-\pi|} \cdot |x-\pi|^{1 + \al(a) -\al} = \inf.
\]
We conclude that $\om_a \notin C^{1, \al}$ for any $ \al(a) <\al < 1$.

So far, we conclude the proof of Theorem \ref{thm:profile}.

\section{Stability of the self-similar profiles}\label{sec:stability}

In this section, we establish the stability of the profiles $(\om_a,  c_{\om,a}), |a-1|<\d_1$ constructed in the previous section and prove Theorem \ref{thm:stability}.

\subsection{Stability analysis}

Denote by $u_{\al}, u_{a,x}$ the velocity field corresponding to $\om_a$. We linearize \eqref{eq:DGdy} around the steady state $(\om_a, c_{\om,a})$ and consider odd perturbation $\om_0 \in \cH$
\beq\label{eq:lin_ex}
\om_t  = - a u_{\al} \om_x + ( c_{\om, a} +  u_{a,x}) \om + (c_{\om} + u_x ) \om_a - a u \om_{a,x} + N( \om )   \teq \cT_a \om +   N(\om ) ,
\eeq
where $\cT_a$ is the linearized operator, $c_{\om}$ and the nonlinear term $N(\om) $ are the same as that in \eqref{eq:normal} and \eqref{eq:NF}. Here, we do not have the error term since $(\om_a, c_{\om,a})$ is the steady state. Since the odd condition on $\om$ is preserved, we have $\om(\pi, t) = u(\pi, t) = 0$ for $t >0$.

 We compare $\cT_a$ with $\cL_a$ in \eqref{eq:lin} or \eqref{eq:linop}.
\[
\bal
\cT_a \om  - \cL_a  \om =& -a ( u_a - \bar u ) \om_x + ( c_{\om, a} -\bar c_{\om} + u_{\al, x} - \bar u_x) \om \\
& + (c_{\om} + u_x) ( \om_{a} - \bar{\om}) - a u (\om_{a,x} - \bar \om_x ) \teq \cR_a \om,
\eal
\]
where $\bar \om = -\sin x, \bar u = \sin x, \bar c_{\om} = a-1$ are given in \eqref{eq:profile}. We focus on the last term in $\cR_a \om$
\[
\bal
 (u(\om_{a,x} - \bar \om_x) )_x =  u_x (\om_{a,x} - \bar \om_x) + u (\om_{a,xx} - \bar \om_{xx} )\teq I + II.
\eal
\]
For $II$, we use the crucial condition $u(\pi,t)=0$ due to the odd condition and an estimate similar to \eqref{eq:xt2} to obtain 
\[
|| II \rho^{1/2}||_{L^2} \les || \f{u}{\sin x}||_{L^{\infty}} || (\om_{a,xx} - \bar{\om}_{xx}) \sin x\rho^{1/2} ||_{L^2} \les || u_x||_{L^{\infty}} || \om_a - \bar{\om} ||_X \les |a-1| || \om||_{\cH}.
\]
The last inequality is due to  \eqref{eq:err_prof} and \eqref{eq:Linf1}. 

Recall the definitions of $\om_e$, the $Y$ norm and inner product in Lemma \ref{lem:linop}. Since $\om$ is odd,  $\om_e = 0$ vanishes and we get $|| \om ||_{\cH} = || \om ||_Y$. 
With the control \eqref{eq:err_prof} on the error $|| \om_a - \bar{\om}||_X$ and $| c_{\om,a} - \bar c_{\om}|$, the $I$ term and other terms in $\cR_a \om$ can be estimated in a way similar to that in Section \ref{sec:H1}.  In particular, we can obtain 
\[
|\la \cR_a \om, \om \ra_Y | =| \la \pa_x ( \cR_a \om), \om_x \rho \ra | 
\les |a-1| || \om||^2_{\cH},
\]
which along with \eqref{eq:H1_est2} implies 
\[
\la \cT_a \om, \om\ra_Y \leq ( -\f{3}{8} + C|a-1|) || \om||_Y^2.
\]

The estimate of $N(\om)$ is essentially the same as that in Section \ref{sec:NF} and we can establish estimates similar to \eqref{eq:H1_N}. In summary, we yield 
\[
\f{1}{2} \f{d}{dt} || \om||_{\cH}^2 \leq ( -\f{3}{8} + C_1|a-1|) || \om||_{\cH}^2 + C_1 || \om ||_{\cH}^3
\]
for some absolute constant $C_1 >0$, where we have replaced the $Y$ norm by the $\cH$ norm since they are the same. Therefore, there exist small positive parameters $\d_{20} < \d_1 $ and $\d_{30}$, such that for $|a-1| < \d_{20}$, if $|| \om_0||_{\cH} < \d_{30}$ then  
\beq\label{eq:conv}
\f{1}{2} \f{d}{dt} || \om||^2_{\cH} < -\f{1}{3}  || \om||^2_{\cH}.
\eeq
As a result, $|| \om||_{\cH}$ decays exponentially fast. We can further choose smaller $\d_{20}, \d_{30}$ such that 
\[
\bal
 |u_x(0) + u_{a,x}(0) - 1| &\leq |u_x(0)| + |u_{a,x}(0) - 1|
 \leq C || \om||_{\cH} + C || \om_a + \sin x ||_X \\
 &\leq C || \om_0||_{\cH} + C |a-1| \leq C (\d_{20} + \d_{30}) < \f{1}{10}.
\eal
\]
This ensures that $c_{\om,a} + c_{\om} = (a-1)( u_x(0) + u_{a,x}(0) )$ satisfies 
\beq\label{eq:cw}
sign( c_{\om,a} + c_{\om} ) = sign(a-1), \quad \f{9}{10} |a-1| \leq |c_{\om,a} + c_{\om} |  \leq \f{11}{10} |a-1|.
\eeq

Note that $\om_0 + \om_a$ is the initial data of \eqref{eq:DGdy}. Suppose that $|| \om_0 + \om_a + \sin x||_{\cH} < \d_3$ and $|a-1| < \d_2$ for $\d_2, \d_3$ to be determined. Using triangle inequality, we obtain 
\beq\label{eq:triangle}
|| \om_0||_{\cH} \leq  || \om_0 + \om_a + \sin x||_{\cH} + || \om_a + \sin x ||_{\cH}
 < \d_3+ C|a-1|  \leq \d_3 + C \d_2.
\eeq

We choose $\d_2, \d_3 >0$ in Theorem \ref{thm:stability} such that  
\beq\label{eq:para_thm2}
0 < \d_2 < \min(\d_{20}, \f{1}{100}), \quad \d_3 + C\d_2 < \d_{30} .
\eeq

As a result, if the initial data $\om_0 + \om_a$ of \eqref{eq:DGdy} satisfies $||\om_0 + \om_a + \sin x||_{\cH} < \d_3$, then $|| \om_0 ||_{\cH} < \d_{30}$ and thus we obtain the estimates \eqref{eq:conv}, \eqref{eq:cw}.

\subsection{ Rescaling}
Recall the rescaling relations \eqref{eq:rescal1}-\eqref{eq:rescal3}. To avoid confusion, we use $\tau, \om^{DR}$ to represent the temporal variable and solution in \eqref{eq:rescal2} and $t, \om^{phy}$ to represent those in the physical equation \eqref{eq:DG}. Denote by $c(\tau) = c_{\om}(\tau) + c_{\om,a}$ the scaling factor. By definition, we have $\om^{DR} = \om + \om_a$. Here, $c_{\om}(\tau), \om(\tau)$ are the perturbations and $\om^{DR}, c(\tau)$ solves \eqref{eq:DGdy}.

 The relations \eqref{eq:rescal1}-\eqref{eq:rescal3} imply
\beq\label{eq:rescal4}
\om^{phy}( x, t(\tau)) = C^{-1}_{\om}(\tau) \om^{DR}( x, \tau), \quad C_{\om}(\tau) = C_{\om}(0) \exp( \int_0^{\tau} c(s) ds)
\eeq
and $t(\tau) = \int_0^{\tau} C_{\om}(s) ds.$

Suppose that $\om_0^{phy}$ is odd and $|| \om^{phy}_0 + \sin x||_{\cH} < \d_3$. We choose $C_{\om}(0)=1$ so that $\om_0^{DR} = \om_0^{phy}$. Estimate \eqref{eq:triangle} and its following discussion implies \eqref{eq:conv} and \eqref{eq:cw}. Therefore, for $1 < a< 1+ \d_2$, we obtain $c(\tau) > \f{9}{10}|a-1|$, while for $ 1-\d_2 < a < 1$, we get $-\f{11}{10} |a-1| < c(\tau) < -\f{9}{10}|a-1|$. The discussion in Section \ref{sec:dsform} implies the blowup result for $1-\d_2 < a< 1$ and the long time behavior of the solution for $1<a< 1+ \d_2$ in Theorem \ref{thm:stability}. It remains to establish the estimates in Theorem \ref{thm:stability}.

\subsubsection{ Estimate of $\lam(t)$}\label{sec:lamt}
We choose $\lam( t(\tau)) = C_{\om}(\tau)$. 

For $ 1-\d_2 < a< 1$, since $c(s) <0$, $\lam(t(\tau))$ is decreasing. Using $c(s) = c_{\om} + c_{\om, a}$, \eqref{eq:cw} and the formula \eqref{eq:rescal4}, we obtain the estimate of the blowup time 
\[
T = t(\infty) = \int_0^{\infty} \exp(\int_0^{\tau} c(s) ds) d \tau
\geq \int_0^{\infty} \exp( - \f{11}{10} |a-1|\tau ) d \tau  \geq \f{1}{2|a-1|}
\]
and the estimate of $\lam(t(\tau))$
\beq\label{eq:lamt}
\f{T - t(\tau)}{\lam( t(\tau))} = \f{ \int_{\tau}^{\infty} C_{\om}(s) ds}{ C_{\om}(\tau)}
= \int_{\tau}^{\infty} \exp( \int_{\tau}^s c(z) dz) d s
= \int_0^{\infty} \exp( \int_0^s c( \tau + z) dz ) ds ,
\eeq
where we have used a change of variable $ s \to \tau + s$ in the last equality. Since the perturbation $c_{\om}(\tau), \om(\tau)$ decays exponentially fast in $\tau$ (see \eqref{eq:conv}), we have  $c(\tau) = c_{\om}(\tau) + c_{\om,a} \to c_{\om, a} < 0$. Note that $\exp( \int_0^s c( \tau + z) dz ) \leq \exp( -\f{9}{10}|a-1| s) $ is integrable. Applying Dominated Convergence Theorem yields 
\[
\f{T -t(\tau)}{ \lam(t(\tau))} \to \int_0^{\infty} \exp( c_{\om, a} s) ds = - \f{1}{c_{\om, a}}.
\]
Taking the inverse of the above limit implies $\f{ \lam(t(\tau)}{T - t(\tau)} \to -c_{\om, a}$.

Similarly, for $ 1 < a < 1 + \d_2$, we can obtain $\lam(t(\tau))$ is increasing and $\f{ \lam(t(\tau)}{ t(\tau)} \to c_{\om, a}$.

\subsubsection{Convergence estimates}
Next, we establish the convergence estimate. Using \eqref{eq:rescal4}, $\lam(t(\tau)) = C_{\om}(\tau)$ and $\om^{DR} = \om + \om_a$, we have 
\[
 || \om^{phy} - \lam(t(\tau))^{-1} \om_a ||_{\cH} = C_{\om}(\tau)^{-1} || \om^{DR} - \om_a ||_{\cH}
 =  C_{\om}(\tau)^{-1} || \om ||_{\cH}.
\]
Using \eqref{eq:conv} and $ \om_0^{phy}= \om_0^{DR} = \om_0 + \om_a$, we further obtain 
\[
 || \om^{phy} - \lam(t(\tau)^{-1} \om_a ||_{\cH}  \leq C_{\om}(\tau)^{-1} e^{ -\f{\tau}{3}} || \om_0||_{\cH} 
 = C_{\om}(\tau)^{-1} e^{ -\f{\tau}{3}} || \om_0^{phy} - \om_a||_{\cH}.
\]
For $a \neq 1$, applying \eqref{eq:cw} to $c(s) = c_{\om} + c_{\om, a}$ and using \eqref{eq:rescal4}, we obtain 
\[
(\max( C_{\om}^{-1}(\tau) , C_{\om}(\tau) )  )^{ \f{1}{4( |1-a|)} + 1}
\leq \exp\B( \f{11}{10} |a-1| \tau \cdot (\f{1}{4|1-a|} + 1)  \B)\leq \exp( \f{\tau}{3}),
\]
where we have used $|a-1| < \d_2 < \f{1}{100}$ from \eqref{eq:para_thm2} in the last inequality. Combining the above two estimates and substituting $\lam(t(\tau)) = C_{\om}(\tau)$, we prove 
\beq\label{eq:conv2}
 || \om^{phy} - \lam(t(\tau))^{-1} \om_a ||_{\cH} \leq (\max( \lam(t(\tau)), \lam(t(\tau))^{-1} ))^{- \f{1}{ 4|a-1|}} || \om_0^{phy} - \om_a||_{\cH}.
\eeq
Since $\lam(t(\tau)) \leq 1$ for $ 1-\d_2 < a < 1$ and $\lam(t(\tau)) \geq 1$ for $1 < a< 1 + \d_2$, it follows the convergence estimates in Theorem \ref{thm:stability}.

\subsubsection{Growth of $|| \om^{phy}_x||_{L^{\infty}}$}

We focus on $ 1 < a < 1 + \d_2$ and further assume that $\om^{phy} \in H^s, s > \f{3}{2}$. The decay estimates \eqref{eq:conv2} and the BKM-type blowup criterion implies that $\om$ remains in $H^s$. Since $\om^{phy}$ is odd, we have $u^{phy}(\pi) = \om^{phy}(\pi) = 0$. The evolution of $\om^{phy}_x(\pi)$ is given by 
\[
\pa_{t} \om^{phy}_x(\pi) =  (1-a) u^{phy}_x(\pi) \om^{phy}_x(\pi).
\]
To avoid using $\om_{xx}$, the above ODE can be established by 
dividing both sides of \eqref{eq:DG} by $(x-\pi)$ and then taking $x\to \pi$ or using the flow map. Solving the ODE, we obtain 
\[
\om_x^{phy}( \pi, t ) = \exp\B( (1-a) \int_0^{ t } u_x^{phy}(\pi, s ) ds \B) \om^{phy}_{0,x}( \pi).
\]

Using the fast convergence \eqref{eq:conv2}, for any $t >0$, we obtain 
\[
S(t) \teq (1-a)\int_0^t u_x^{phy}(\pi, s ) ds  
\geq (1-a) \int_0^t \f{1}{\lam(s)} u_{a,x}(\pi) ds - C(a)  \geq - C(a).
\]

From the estimate following \eqref{eq:hol_exp}, we have $u_{a, x}(\pi) < 0$. Hence $(1-a) u_{a,x}(\pi) > 0$. From the estimate of $\lam(t)$ in Section \ref{sec:lamt}, 
it is not difficult to obtain that for any $\e > 0$, there exists $ C(\e, a)$, such that $\f{ \lam( t(\tau))}{ t(\tau)} < c_{\om, a} +\e $ for $t(\tau) > C(\e, a)$ (if $t(\tau)$ large, then $\tau$ must be large). Then for $t>1$ and $0<\e$ we get
\[
S(t) \geq (1-a) u_{a,x}(\pi) \int_1^{t}  \f{ 1}{ (c_{\om, a} + \e ) s } ds - C( a, \e)
= \f{(1-a) u_{a,x}(\pi) }{ c_{\om, a} + \e} \log(t) - C(a, \e).
\]
Combining the above estimates, we prove 
\[
| \om_x^{phy}(\pi, t)| \gtrsim_{a, \e} t^{ \g} |\om_{0, x}(\pi)|
\]
with $\g = \f{(1-a) u_{a, x}(\pi)}{ c_{\om, a } + \e}$. Since $\e >0$ is arbitrary, we establish the growth of $|\om_x(\pi, t)|$ in Theorem \ref{thm:stability}. The estimate $|\g(a)-1| \les |a-1|$ follows from \eqref{eq:err_prof}.

So far, we conclude the proof of Theorem \ref{thm:stability}.

\section{Proof of Theorem \ref{thm:weak}}\label{sec:weak}

The proof is similar to that of Theorem \ref{thm:stability}. We consider the perturbation around the approximate steady state $\om_0 = -\sin x, \bar c_{\om} = a-1$ \eqref{eq:profile}. In Sections \ref{sec:H1}, \ref{sec:NF}, we have obtained the following estimates for the perturbation $\om(t)$ with $\om_0 \in \cH$ and $\int_{S^1} \om_0 dx =0$ under normalization condition \eqref{eq:normal} on $c_{\om}$
\[
\f{1}{2} \f{d}{dt} || \om||_Y^2 \leq (-\f{3}{8} + C_2|a-1|) || \om||_Y^2 
+ C_2 ||\om||_Y^3 + C_2|a-1| \cdot || \om||_Y,
\]
where we have used the equivalence between the $Y$ norm and the $\cH$ norm in Lemma \ref{lem:linop}. Remark that we do not require that $\om_0$ is odd to obtain the weighted $H^1$ estimates of linear, nonlinear and the error terms. It follows that there exist $\d_4, \d_5 > 0$ 
such that for $|a-1| < \d_4$ and any $||\om_0||_Y < \d_5$, the bootstrap assumption $|| \om(t)||_Y <\d_5$ holds for any $t> 0$. 
Using the equivalence of norms in Lemma \ref{lem:linop} again and this bootstrap result, we obtain that if $||\om_0||_{\cH} < \d_5$, then $||\om_0||_Y < \d_5$, which further implies $|| \om(t)||_Y < \d_5$ and $|| \om(t)||_{\cH} \leq 2 || \om(t)||_Y < 2 \d_5$. The factor $2$ in the upper bound $2 \lam(t)^{-1} \d_5$ in Theorem \ref{thm:weak} is due to this equivalence. 

Up to further choosing smaller $\d_4, \d_5$, using the bootstrap result, we can obtain $|c(s) - (a-1)| \leq \f{|1-a|}{10}$ similar to \eqref{eq:cw}, where $c(s) = c_{\om} + a-1$. Plugging this estimate into \eqref{eq:lamt} yields the estimate of $ \f{\lam(t)}{T-t}$ for $1-\d_4 < a < 1$. $\f{\lam(t)}{t}$ is estimated similarly. Using the bootstrap result and the argument in Section \ref{sec:stability}, we can prove other results in Theorem \ref{thm:weak}. We omit the details.


\section{ An approach to obtain potential finite time blowup for other $a<1$}\label{sec:approach}

We discuss an approach which has the potential to be applied to obtain finite time blowup of \eqref{eq:DG} on a circle for other $a<1$ from smooth initial data. It is based on the method in \cite{chen2019finite}.

\vspace{0.1in}
\paragraph{\bf{Construction of approximate steady state} }
The first step is to construct the approximate steady state using the dynamic rescaling equation 
\[
\om_t + a u\om_x = (c_{\om} + u_x) \om,
\]
which is the same as \eqref{eq:DGdy}, with normalization condition $c_{\om} = (a-1 ) u_x(0)$. 

For $a$ close to $1$, e.g. $0.95 < a< 1$, $\om_0 = -\sin x$ provides a good candidate for initial data. An approximate steady state $(\bar \om, \bar c_{\om})$ can be obtained by solving the above dynamic rescaling equation for long enough time numerically. The approximation error can be estimated a posteriori. If the error is sufficiently small, we can further perform stability analysis around $(\bar \om, \bar c_{\om})$. See more discussions in \cite{chen2019finite}. For $a$ away from $1$, e.g. $a <0.95$, the initial data can be chosen successively based on the approximate steady state for larger $a$, if it exists. Our preliminary numerical results suggest that the solution converges to some profile and the approximation error $ F(\om) \teq (c_{\om} + u_x) \om - a u\om_x$ decays rapidly in time. For $a$ away from $1$, the approximate steady state can also be constructed using the method in \cite{lushnikov2020collapse}.

\vspace{0.1in}
\paragraph{\bf{Stability analysis}}
Once an approximate steady state is constructed, one can follow the steps in Sections \ref{sec:H1} and \ref{sec:NF}. The key step is to establish the linear stability. For $a$ close to $1$, e.g. $0.95 < a< 1$, it is conceivable that linear stability can be established in a way similar to that in Section \ref{sec:H1} by applying the analysis of $\cL_1$ in Lemma \ref{lem:linop} (established in \cite{lei2019constantin}) and controlling the difference between $\cL_1$ and the new linearized operator. For $a$ away from $1$, linear stability may be established by an energy estimate similar to that in \cite{chen2019finite} using some well-chosen singular weight.

\appendix

\section{}
\begin{lem} \label{lem:tri}
Suppose that $f \in L^2( \sin^{-2}\f{x}{2})$. We have 
\beq\label{eq:tri}
\int_{S^1} \cot \f{x}{2} f \cdot Hf dx =  - \pi H f(0)^2.
\eeq

\end{lem}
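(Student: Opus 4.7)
The plan is to unfold the Hilbert transform and symmetrise. Writing out $Hf$ via its defining principal-value integral turns the left-hand side into
\[
I = \frac{1}{2\pi}\,\mathrm{P.V.}\!\iint_{S^1\times S^1} \cot\tfrac{x}{2}\,\cot\tfrac{x-y}{2}\, f(x)\,f(y)\,dx\,dy.
\]
Interchanging the labels $x\leftrightarrow y$ and using the oddness $\cot((y-x)/2)=-\cot((x-y)/2)$ produces a second expression for $I$; averaging the two yields the symmetrised form with kernel $[\cot\tfrac{x}{2}-\cot\tfrac{y}{2}]\cot\tfrac{x-y}{2}$.

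The next step is to collapse this kernel using the elementary identity $\cot A-\cot B=\sin(B-A)/(\sin A\sin B)$ with $A=x/2$, $B=y/2$. The factor $\sin((y-x)/2)$ cancels the denominator of $\cot((x-y)/2)$, and the remaining $\cos((x-y)/2)$ expands via the angle-subtraction formula to give
\[
\bigl[\cot\tfrac{x}{2}-\cot\tfrac{y}{2}\bigr]\cot\tfrac{x-y}{2}
=-\cot\tfrac{x}{2}\cot\tfrac{y}{2}-1.
\]
Crucially, the diagonal singularity at $x=y$ has disappeared, so the principal value is no longer needed and the double integral factorises into two rank-one pieces:
\[
I=-\frac{1}{4\pi}\Bigl(\int\cot\tfrac{x}{2}\,f\,dx\Bigr)^{2}
-\frac{1}{4\pi}\Bigl(\int f\,dx\Bigr)^{2}.
\]

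To finish, evaluating the definition of $H$ at $x=0$ gives $\int \cot\tfrac{y}{2}f(y)\,dy = -2\pi\,Hf(0)$, so the first bracket contributes exactly $-\pi\,Hf(0)^{2}$. The second bracket vanishes because in the setting in which the lemma is applied, $f$ is a total derivative on $S^1$ (e.g.\ $f=\partial_x D_x\omega$) and hence has mean zero. The main delicacy is the legitimacy of the symmetrisation: the kernel $\cot((x-y)/2)$ is singular on the diagonal and the $\cot(x/2)$ factor is singular at $x=0$, so one should introduce $\varepsilon$-cutoffs around $\{x=y\}$ and $\{x=0,\,y=0\}$, carry out the algebraic manipulation on the regularised integrals, and pass to the limit. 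The diagonal singularity cancels exactly when the two versions are averaged (the antisymmetric part of $\cot((x-y)/2)$ pairs against the symmetric combination), and the endpoint singularity at the origin is tamed by the hypothesis $f\in L^{2}(\sin^{-2}\tfrac{x}{2})$, which forces $\cot(x/2)\,f(x)$ to be square-integrable near $x=0$.
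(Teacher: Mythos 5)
Your proof is correct in substance but takes a genuinely different route from the paper's. The paper handles smooth $f$ in one line by citing the Tricomi identity, $\int_{S^1}\cot\frac{x}{2}\,f\cdot Hf\,dx=-2\pi H(f\cdot Hf)(0)=-\pi\bigl((Hf(0))^2-f(0)^2\bigr)$, uses that the weight forces $f(0)=0$, and then extends to all $f\in L^2(\sin^{-2}\frac{x}{2})$ by an approximation argument ($Hf_n(0)\to Hf(0)$ and $f_n\cot\frac{x}{2}\to f\cot\frac{x}{2}$ follow from convergence in the weighted norm via Cauchy--Schwarz). You instead unfold the kernel, symmetrize, and collapse the symmetric kernel with the cotangent subtraction identity; this is more elementary and self-contained, since it effectively reproves the needed instance of Tricomi's identity rather than quoting it. Your $\varepsilon$-cutoff plan for legitimizing the symmetrization is workable, though it would be cleaner to run your algebra for smooth $f$ and then borrow the paper's density step.

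The more interesting point is the extra term $-\frac{1}{4\pi}\bigl(\int f\bigr)^2$ that your computation produces: it is genuinely there, and it shows the lemma as stated (and the paper's one-line proof of it) is not quite right without a mean-zero hypothesis. On the circle the Tricomi identity carries a mean correction, $2H(f\cdot Hf)=(Hf)^2-f^2+\bar{f}^2$ with $\bar{f}=\frac{1}{2\pi}\int_{S^1} f\,dx$, which the paper's computation drops. Concretely, $f(x)=\sin^2\frac{x}{2}$ lies in $L^2(\sin^{-2}\frac{x}{2})$ and has $Hf=-\frac{1}{2}\sin x$, so $Hf(0)=0$, yet
\begin{equation*}
\int_{S^1}\cot\frac{x}{2}\,f\cdot Hf\,dx=-\int_{S^1}\frac{\sin^2 x}{4}\,dx=-\frac{\pi}{4}\neq 0,
\end{equation*}
in agreement with your formula since $\int_{S^1} f\,dx=\pi$. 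So what you have proved is the corrected identity $\int_{S^1}\cot\frac{x}{2}\,f\cdot Hf\,dx=-\pi\,Hf(0)^2-\frac{1}{4\pi}\bigl(\int_{S^1}f\,dx\bigr)^2$, which reduces to the lemma exactly when $\int_{S^1} f\,dx=0$. Your appeal to ``the setting in which the lemma is applied'' is, strictly speaking, an admission that you are proving a statement with an added hypothesis --- but that added hypothesis is in fact necessary, and it is harmless for the paper: the lemma's only use is with $f=\partial_x D_x\omega$, an exact derivative of a periodic function, hence mean-zero. You should state the mean-zero assumption explicitly in the lemma rather than leave it implicit in the application.
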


\begin{proof}
Firstly, we consider $f \in C^{\infty}$. Using the Tricomi identity of the Hilbert transform (see e.g. \cite{Elg17,chen2019finite}), we obtain 
\[
\int_{S^1} \cot \f{x}{2} f \cdot Hf dx = - 2 \pi H( f \cdot Hf)(0)  = - \pi ( (Hf(0))^2 - f(0)^2).
\]
Since $f \in L^2( \sin^{-2} \f{x}{2})$, we have $f(0) = 0$ and obtain \eqref{eq:tri} for $f \in C^{\infty}$. For general $f$, we can find a sequence $f_n \in C^{\infty}$ such that $f_n \to f$ in $L^2( \sin^{-2} \f{x}{2})$. Clearly, we have $H f_n \to Hf $ and $f_n \cot \f{x}{2} \to f \cot \f{x}{2}$ in $L^2$. Using the Cauchy-Schwarz inequality, we get $ Hf_n(0) \to Hf(0)$. Applying \eqref{eq:tri} to $f_n$ and then taking $n\to \infty$ concludes the proof.
\end{proof}

Next, we prove Lemmas \ref{lem:commu} and \ref{lem:linf}.
\begin{proof}[Proof of Lemma \ref{lem:commu}]
Applying integration by parts yields 
\[
\bal
D_x Hf(x) &=\f{1}{2\pi} \int \sin x  f(y) \cdot  \pa_x \cot \f{x-y}{2} dy 
= -\f{1}{2\pi} \int \sin x f(y) \pa_y  \cot \f{x-y}{2} dy  \\
& = \f{1}{2\pi} \int  \sin x f_y(y) \cot \f{x-y}{2} dy . \\
\eal
\]
It follows 
\[
\bal
D_x H f(x) - H(D_x f )(x)  &= \f{1}{2\pi} \int (\sin x -\sin y) f_y(y)   \cot \f{x-y}{2} dy \\
\eal
\]
Note that $(\sin x - \sin y) \cot \f{x-y}{2} = 2 \sin \f{x-y}{2} \cos \f{x+y}{2} \cot \f{x-y}{2} = 2 \cos \f{x+y}{2} \cos \f{x-y}{2} = \cos x + \cos y.$ We conclude
\[
D_x H f(x) - H(D_x f )(x)  = \f{1}{2\pi} \int (\cos x + \cos y) f_y(y) dy =
 \f{1}{2\pi} \int \sin y \om(y) dy.
\]
\end{proof}

\begin{proof}[Proof of Lemma \ref{lem:linf}]
Using the Cauchy-Schwarz inequality, we obtain 
\[
\B|\f{ \om }{\sin \f{x}{2}} \B|=  \f{1}{ |\sin \f{x}{2}| } |\int_0^x \om_x dx |
\les \f{1}{ |\sin \f{x}{2}| }  (\int_0^x \sin^2 \f{x}{2} dx )^{1/2} || \om ||_{\cH} 
 \les || \om ||_{\cH}
 \]
 for any $ x \in S^1$, which concludes the proof.
\end{proof}

\vspace{0.2in}
\noindent
{\bf Acknowledgments.} The author would like to thank Thomas Hou for helpful comments on an earlier version of this work. This research was supported in part by grants DMS-1907977 and DMS-1912654
from the National Science Foundation.


\bibliographystyle{plain}
\bibliography{selfsimilar}

\end{document}